\newtheorem{theorem}{Theorem}[section]
\newtheorem{conjecture}[theorem]{Conjecture}
\newtheorem{question}[theorem]{Question}
\newtheorem{corollary}[theorem]{Corollary}
\newtheorem{definition}[theorem]{Definition}
\newtheorem{remark}[theorem]{Remark}
\newcommand{\vanish}[1]{}\parskip=12pt
\newcommand{\sign}{\mbox{\rm sign}}
\def\b{\qopname\relax o{braid}}
\def\br{\qopname\relax o{bridge}}
\def\R{\mathbb{R}}
\def\Z{\mathbb{Z}}
\numberwithin{equation}{section}
\newcommand{\Ebend}{E_{\mathrm{bend}}}
\begin{document}

\title{Knots with equal bridge index and braid index}

\author{Yuanan Diao$^\dagger$, Claus Ernst$^*$ and Philipp Reiter$^\sharp$}
\address{$^\dagger$ Department of Mathematics and Statistics\\
University of North Carolina Charlotte\\
Charlotte, NC 28223, USA}
\address{$^*$ Department of Mathematics\\
Western Kentucky University\\
Bowling Green, KY 42101, USA}
\address{$^\sharp$ Faculty of Mathematics\\
Chemnitz University of Technology\\
09107 Chemnitz, Saxony, Germany}

\begin{abstract}
In this paper we are interested in %
\emph{BB knots}, namely knots and links where the bridge index equals the braid index.
Supported by observations from experiments, it is conjectured that BB knots %
possess a special geometric/physical property
(and might even be characterized by it): if the knot is realized by a (closed) springy metal wire, then the equilibrium state of the wire is in an almost planar configuration of multiple (overlapping) circles. In this paper we provide a %
heuristic explanation to the conjecture and explore the plausibility of the conjecture numerically. We also identify BB knots among various knot families. For example, we are able to identify all BB knots in the family of alternating Montesinos knots, as well as some BB knots in the family of the non-alternating Montesinos knots, and more generally in the family of the Conway algebraic knots. The BB knots we identified in the knot families we considered include all of the 182 one component BB knots with crossing number up to 12. Furthermore, we show that the number of BB knots with a given crossing number $n$ grows exponentially with $n$. 
\end{abstract}

\maketitle

\section{Introduction}

In his famous paper \cite{milnor}, Milnor showed how a knot invariant may be related to geometric properties of the space curve representing a knot by providing a lower bound of the
total curvature of an embedded closed curve explicitly in terms of its bridge index of the knot the curve represents. In a similar spirit, Langer and Singer~\cite{LS} raised a question concerning the equilibrium shape (a physical property) of a closed springy wire and the knot invariants of the knot the closed wire represents. Gallotti and Pierre-Louis~\cite{gallotti} conjectured that the equilibrium shapes within knot types $K$ where braid index $\b(K)$ and
bridge index $\br(K)$ coincide, will be the $\br(K)$-times covered circle. This conjecture has recently been proven in the case of 2-bridge knots~\cite{GRvdM}. For the sake of convenience, we shall call a knot type  whose bridge index and braid index coincide a {\em BB knot}.

Heuristically, it is plausible that the equilibrium shape
of a very thin springy wire should be, if topologically possible,
very close to a $k$-times covered circle.
Recalling the F\'ary--Milnor inequality~\cite{milnor},
the minimum value would be
$k=\br(K)$.
On the other hand, a configuration of a wire that passes
$\br(K)$-times around a circle constitutes in
fact a braid presentation which requires that $k\ge \b(K)$
which by $\b(K)\ge\br(K)$ implies $\br(K)=\b(K)$. The left of Figure \ref{experiment} shows an experiment of the equilibrium state of the figure eight knot realized by a closed springy wire, which is clearly not close to a multiply covered circle. Notice that the figure eight knot is not a BB knot since its bridge index is 2 and braid index is 3. On the other hand, our numerical simulations
indicate that an equilibrium state of a BB knot $K$ is
close to a $\br(K)$-times covered circle as shown in Figure \ref{fig:sims}.

The class of BB knots might also be of potential interest to scientists looking for potential candidates of knot types that can be constructed via molecular knots.
In~\cite{micheletti} the authors give a list of knots that either have been constructed or could potentially be constructed as different molecular knots. At least in the symmetric cases, we observe that there seems to be a prevalence for BB knots, in particular among knot classes with larger crossing number.

In this light, it is a natural question to investigate the
family of all knot or link classes $K$ with $\br(K)=\b(K)$. This is the main focus of this paper.
As a first step towards a complete classification (which might or might
not be feasible) we searched the database KnotInfo \cite{knotinfo} and found that there are 182 one component BB knots with crossing number up to $12$ (they are listed in Table~\ref{table}).
Next, we consider certain families of classes
(namely torus knots/links, 2-bridge knots/links, Montesinos links,
Conway algebraic knots)
and identify (infinite) subfamilies of BB knots within. 

Moreover, there are a number of interesting questions
concerning the number $\mathcal B_{n}$ of BB knots with crossing number $n$.
Does $\mathcal B_{n}$ grow exponentially?
What about the ratio of $\mathcal B_{n}$ over the number of \emph{all} knots
with with crossing number $n$~?

\begin{figure}[!h]
\label{experiment}
\includegraphics[scale=.18,trim=20 0 180 0,clip]{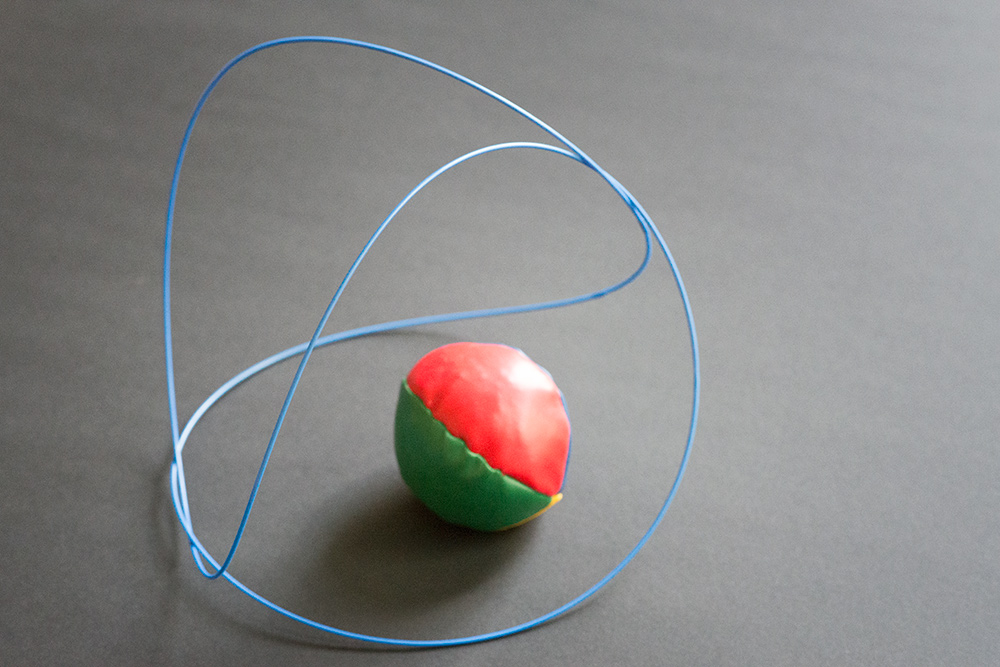}
\quad%
\rotatebox[origin=c]{-20}{\reflectbox{\includegraphics[scale=.36]{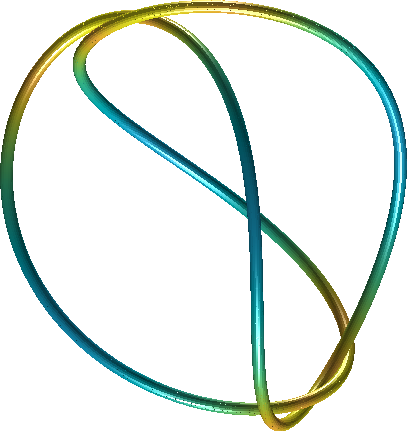}}}
\caption{Left: An experiment with a springy wire belonging to the figure-eight class (Wire model manufactured by \textsc{why knots}, Aptos, in 1980, photographed by Bernd Bollwerk, Aachen);
Right: A  local minimizer produced by numerical simulation featuring nearly the same shape (see~\cite{BR} for details).
}
\end{figure}

Before we proceed to the next section, we need to clarify a couple of key terms and concepts used in this paper. First,  in what follows we will use the word ``knot'' to indicate both a knot type or a particular knot embedding. What we mean will depend on the context and should be clear to the reader. We will use the term ``knot class'' when we talk about the set of embeddings of a knot type with certain geometric restrictions.
Moreover, the term ``knot" in this paper is also used for the commonly used term ``link" in other literature. That is, a knot in this paper may have more than one component. When there is a need, we shall make this clear by specifying the number of components in the knot being discussed. For example, the BB knots listed in Table \ref{table} are all knots with one component. Second, in the definition of BB knots, the knots are un-oriented (since orientation plays no role to the physical property of interest that leads to the definition of BB knots). However our approach to this problem requires the use of the braid index defined for oriented knots. We would like to point out the connection between these two definitions to avoid confusion. Let $K$ be an un-oriented knot, $\vec{K}$ be an oriented knot corresponding to $K$ (that is, each component in $K$ has been assigned an orientation) and $\b(\vec{K})$ be the braid index of $\vec{K}$ (as an oriented knot). Then un-oriented braid index $\b(K)$ of $K$, which is what we have used in the above in the definition of BB knots, is defined as the minimum of $\vec{K}$ where $\vec{K}$ runs over all possible orientation assignments to the components of $K$. It is known (a rather obvious fact) that $\b(\vec{K})\ge \br(K)$. Thus if $\b(\vec{K})= \br(K)$ for some $\vec{K}$,  then we must have $\b(\vec{K})= \b(K)$. Of course, if $k$ has only one component then for any choice of orientation $\b(\vec K)=\b (K)$.

We organize the rest of the paper as follows: In Sections \ref{sec:bend} and \ref{numerics} we discuss BB knots made of thin springy wires (so called \emph{elastic knots}) and consider the  bending energy of elastic knots in a numerical experiment. 
In Section \ref{knotfamilies} we identify BB knots among several knot families. In Section \ref{numberofknots} we show that the number of BB knots with a given crossing number grows exponentially as a function of the crossing number
and state a few questions.

\section{Bending energy minimizers within knot classes}\label{sec:bend}

The aim of this section is to explain how the BB knots appear in an elementary model
for the experiment described in the introduction.
For convenience, we will present here a simplified version
of the model of \emph{elastic knots} which is described in Section~\ref{numerics}
below.

\subsection{A variational problem}

Neglecting all other physical forces such as friction, shear, and twist,
we will assume that the behavior of that springy wire is only affected
by the bending energy of its centerline $\gamma:\R/\Z\to\R^{3}$
\[ \Ebend(\gamma)=\int_{\gamma}(\kappa(s))^{2} ds \]
where $\kappa(s)$ denotes the curvature of $\gamma$ at $s$
while $s$ is an arc-length parameter. Without loss of generality
we may assume that $\gamma$ is parametrized by arc-length which gives
$\Ebend(\gamma)=\|\gamma''\|_{L^{2}}^{2}$.

Our aim would be to find global minimizers of $\Ebend$ within a given knot class $K$.
As $\Ebend$ is not invariant under scaling, we will consider only unit length embeddings
which leads to considering the set
\[ \mathscr C(K) = \left\{\gamma\in W^{2,2}(\R/\Z,\R^{3}) \middle|
|\gamma'(s)|\equiv1, \gamma(0)=0_{\R^{3}},\gamma\in K\right\}. \]
Here we fix $\gamma(0)$ just for technical reasons.

We will always assume that $K$ is a non-trivial tame knot class,
for the other cases are uninteresting.
The global minimizer of $\Ebend$ within the unknot class is
just the circle.
As wild knots are not $C^{1}$ and $W^{2,2}\hookrightarrow C^{1,1/2}$
by the Sobolev embedding theorem, $\mathscr C(K)=\emptyset$ for wild $K$.

\subsection{Minimizers}

It turns out that, except for the trivial knot class, there are no such minimizers:
Any potential minimizer within $K$ would be $C^{1}$ (due to the Sobolev embedding
$W^{2,2}\hookrightarrow C^{1,1/2}$) and embedded (since it is a knot).
According to~\cite{DEJvR} %
there is a $C^{1}$-neighborhood consisting of knotted curves belonging to $K$ as well.
In particular, this neighborhood contains a $W^{2,2}$-neighborhood
with the same property which demonstrates that $\gamma$ is not only
a global minimizer of $\Ebend$ within $K$ but also a local minimizer
of $\Ebend$ with respect to \emph{all} curves.
Now, by Langer and Singer's seminal result~\cite{LS},
the circle is the only curve with the latter property.
This fact reflects the observation that there are indeed points of
self-contact present in physical models.

Nevertheless we can consider a minimal sequence $(\gamma_{k})_{k\in\mathbf N}$ with respect to $\Ebend$
in $\mathscr C(K)$,
{\em i.e.}, $\gamma_{k}\in\mathscr C(K)$ for all $k\in\mathbf N$ with $\Ebend(\gamma_{k})\to\inf_{\mathscr C(K)}\Ebend$ as $k\to\infty$.
By the fact that $\Ebend(\gamma)=\|\gamma''\|_{L^{2}}^{2}$
and $\gamma(0)=0_{\R^{3}}$ for all $\gamma\in\mathscr C(K)$
we deduce that this minimal sequence is uniformly bounded in $W^{2,2}$.
Being a reflexive space, bounded sets in $W^{2,2}$
are weakly compact.
Therefore we may extract a subsequence converging to some limit curve $\gamma_{0}$
with respect to the weak $W^{2,2}$-topology.
Any such limit curve belongs to the weak $W^{2,2}$-closure
of $\mathscr{C}(K)$ which will be denoted by $\overline{\mathscr{C}(K)}$.

Potentially there can be many of those limit curves, depending on the
minimal sequence.
Any of these limit curves has the following properties:
It is not embedded (due to~\cite{LS}, unless $K$ is trivial), so it
belongs to the weak $W^{2,2}$-\emph{boundary} of $\mathscr C(K)$.
Its bending energy provides
a lower bound on the bending energy of any curve in $\mathscr C(K)$.
This is due to the fact that $\Ebend$ is lower \emph{semi}continuous with respect to weak $W^{2,2}$-convergence.

\subsection{The case of BB knots}

Recall that the F\'ary--Milner inequality~\cite{milnor} bounds the
total curvature $\int\kappa(s)ds$ of a curve belonging to $K$ by $2\pi \br(K)$ such that
we have for $\gamma\in\mathscr C(K)$
\[ 2\pi \br(K)<\int_{\gamma}\kappa(s)ds=\|\gamma''\|_{L^{1}}. \]
As $\gamma\mapsto{}\|\gamma''\|_{L^{1}}$ is not continuous with respect to weak
$W^{2,2}$-convergence it is not clear whether this lower bound
also holds for $\overline{\mathscr C(K)}$.
In fact, using the existence of \emph{alternating} quadrisecants
estabished by Denne~\cite{denne}, this has been
proven~\cite[Appendix]{GRvdM} for $\br(K)=2$. In the following
we \emph{assume} that it also holds for higher-order bridge indices.
Invoking the Cauchy--Schwarz inequality, the limit curve $\gamma_{0}\in\overline{\mathscr C(K)}$ satisfies
\[ 2\pi \br(K)\le\int_{\gamma_{0}}\kappa(s)ds=\|\gamma_{0}''\|_{L^{1}}
\le\|\gamma_{0}''\|_{L^{2}} = \left(\Ebend(\gamma_{0})%
\right)^{1/2}. \]
Using a braid representation we can construct
$C^{2}$-smooth (even $C^{\infty}$-smooth) curves belonging to $K$ inside the standard
$\varrho$-torus in $\R^{3}$, {\em i.e.}, the uniform neighborhood of
width $\varrho\in(0,1)$ of the unit circle,
such that each disk of the $\varrho$-torus is intersected $\b(K)$-times.
Translating and rescaling,
we obtain a family $(\beta_{\varrho})_{\varrho\in(0,1)}\subset\mathscr C(K)$ such that 
$\beta_{\varrho}$ converges (with respect to the $W^{2,2}$-norm) to a $\b(K)$-times
covered circle of length one as $\varrho\searrow0$.
Of course, the latter has bending energy $(2\pi \br(K))^{2}$.
As $\gamma_{0}$ is an $\Ebend$-minimizer within $\overline{\mathscr C(K)}$,
we arrive, for any $\varrho\in(0,1)$, at
\[ (2\pi \br(K))^{2}\le\left(\int_{\gamma_{0}}\kappa(s)ds\right)^{2}=\|\gamma_{0}''\|_{L^{1}}^{2}
\le\|\gamma_{0}''\|^2_{L^{2}} = \Ebend(\gamma_{0}) \le \Ebend(\beta_{\varrho})
\xrightarrow{\varrho\searrow0} (2\pi \b(K))^{2}. \]
Here the condition $\b(K)=\br(K)$ comes into play for it implies that
all terms in the previous line are equal. Equality in the Cauchy--Schwarz inequality implies
that the integrand is constant a.e. Therefore the curvature of the minimizer
$\gamma_{0}$ must be equal to $2\pi \br(K)$ a.e.

\subsection{Caveat}

It is important to note that the latter condition does \emph{not}
imply that $\gamma_{0}$ is a $\br(K)$-times covered circle.
Indeed, that would be wrong.
In case $\br(K)=2$ one can rigorously prove~\cite{GRvdM} that
any $\Ebend$ minimizer $\gamma_{0}$ within $\mathscr C(K)$
consists of two circles that either coincide or tangentially meet in
precisely one point.
Up to isometries and reparametrization, the set of those minimizers can be
parametrized by the angle between the two circles.
In the general situation we would expect a number of $\br(K)$ circles
tangentially meeting in (at least) one point.

However, adding a positive thickness restriction to $\mathscr C(K)$ or a penalty term to $\Ebend$
acts like a choice criterion that selects the $\br(K)$-times covered circle from that family of $\Ebend$-minimizers.
This will be outlined below in Section~\ref{numerics}.

\section{BB knots realized by elastic wires}\label{numerics}

\subsection{Elastic knots}

In Section~\ref{sec:bend} above we considered the problem
to minimize $\Ebend$ in the class $\mathscr C(K)$ of curves
in the knot class~$K$. In order to do so, we looked at a
minimal sequence of curves and extracted a subsequence
that converges to some limit curve $\gamma_{0}$.
The problem is that these limit curves are not unique.
For instance, in the case of 2-bridge torus knots ({\em i.e.}, the knots $T(n,2)$) we
obtain an entire one-parameter family of minimizers.
Which of them is the most ``realistic'' one,
{\em i.e.}, one that would be observed in physical experiments?

To answer this question,
we need to incorporate the fact that we are looking at
physical ropes that have some very small thickness in our simulation model.
This can be done by either imposing a constraint to the space of curves~\cite{gallotti,vdM:eke3}, 
or by adding a penalty term to the bending energy~\cite{sossinsky,GRvdM,vdM:meek}.
We will discuss the latter approach, {\em i.e.}, we shall adopt the following model
\[ E_{\vartheta} = \Ebend+\vartheta\mathcal R, \qquad\vartheta>0, \]
where $\mathcal R$ denotes a self-avoiding functional
such as the ropelength, {\em i.e.}, the quotient of length over thickness~\cite{Buck,DEJvR,GM99}.
Now the minimization problem is well defined~\cite{GRvdM,vdM:meek},
{\em i.e.}, for any $\vartheta>0$ there is a curve $\gamma_{\vartheta}\in\mathscr C(K)$
such that $E_{vartheta}(\gamma_{\vartheta})=\inf_{\mathscr C(K)} E_{\vartheta}$.
In particular, for $\vartheta\in(0,1)$ we have that
\[ \|\gamma_{\vartheta}''\|_{L^{2}}^{2}
\le E_{\vartheta}(\gamma_{\vartheta})
= \inf_{\mathscr C(K)} E_{\vartheta}
\le E_{\vartheta}(\gamma_{1})
\le \Ebend(\gamma_{1}) + \mathcal R(\gamma_{1}) \]
is uniformly bounded, %
so we can extract a subsequence as detailed in Section~\ref{sec:bend}.
The limit curve $\gamma_{0}$ is referred to as an \emph{elastic knot} for the knot class $K$~\cite{GRvdM,vdM:meek}.
Note that, unless $K$ is trivial, $\gamma_{0}$ is not embedded.

There are other names for related concepts in the literature, namely
``stiff knot'' (Gallotti and Pierre-Louis~\cite{gallotti}),
and ``normal form'' (Sossinsky who even thought of a complete classification of
knot classes by this concept, see~\cite{sossinsky} and references therein).

Note that, a priori, we cannot expect either the minimizers
$(\gamma_{\vartheta})_{\vartheta>0}$ or the elastic knot $\gamma_{0}$
to be unique. A posteriori, one can show that the elastic knot
for the 2-bridge torus knot classes is (up to isometries) the doubly covered circle~\cite{GRvdM}.
The proof relies on a generalization of the crookedness estimate in Milnor's proof~\cite{milnor}.
Currently there are no rigorous results concerning elastic knots
for other knot classes. However, the $\b(K)$-times covered circle
is a candidate for an elastic knot for any BB class~$K$.
Vice versa, one might speculate that the BB classes
are the only ones whose elastic knots are several times
covered circles.

\subsection{Simulation of elastic knots}

Physical and numerical simulations related to elastic knots
have been carried out so far
by Avvakumov and Sossinsky~\cite{sossinsky}, Gallotti and Pierre-Louis~\cite{gallotti},
 Gerlach et al.~\cite{GRvdM}, as well as by one of the authors and colleagues recently in~\cite{BR,BRR}.
The recently launched web application \textsc{knotevolve}~\cite{knotevolve} allows for carrying out a large variety of new experiments.

Numerically, it is a challenging problem to approximate elastic knots as we face two forces which push the elastic knots in different directions. The experiments carried out in~\cite{BR} shed some light on the energy landscape.
For instance, it is unlikely that the configuration shown in
Figure~\ref{experiment} is an elastic knot for the figure-eight class; see also the discussion in~\cite[Section~6.3]{GiRvdM}.

In Figure~\ref{fig:sims} we show some simulation results for BB knots with braid index up to 3. To this end, we applied the algorithm introduced in~\cite{bartels13,BRR,BR,knotevolve}
to the  regularization parameter $\vartheta{}=10^{-4}$
and initial configurations from the Knot Server~\cite{knotserver}.
For the BB knots with braid index 3, these simulation results still show visible deviation from the
three-times covered circle, in spite of 
the relatively small regularization parameter.

\newcommand{\knot}[3][.2]{%
\fbox{\begin{minipage}[b][38mm]{.22\textwidth}
\includegraphics[scale=#1]{bribra/#2_#3.png}\par\vfill
\includegraphics[scale=#1]{bribra/#2_#3f.png}
\end{minipage}}\makebox[0cm][r]{\raisebox{1ex}{$#2_{#3}$\ }}}
\newcommand{\sknot}[3][.15]{%
\fbox{\begin{minipage}[b][25mm]{.17\textwidth}
\includegraphics[scale=#1]{bribra/#2_#3.png}\par\vfill
\includegraphics[scale=#1]{bribra/#2_#3f.png}
\end{minipage}}\makebox[0cm][r]{\raisebox{1ex}{$#2_{#3}$\ }}}

\begin{figure}
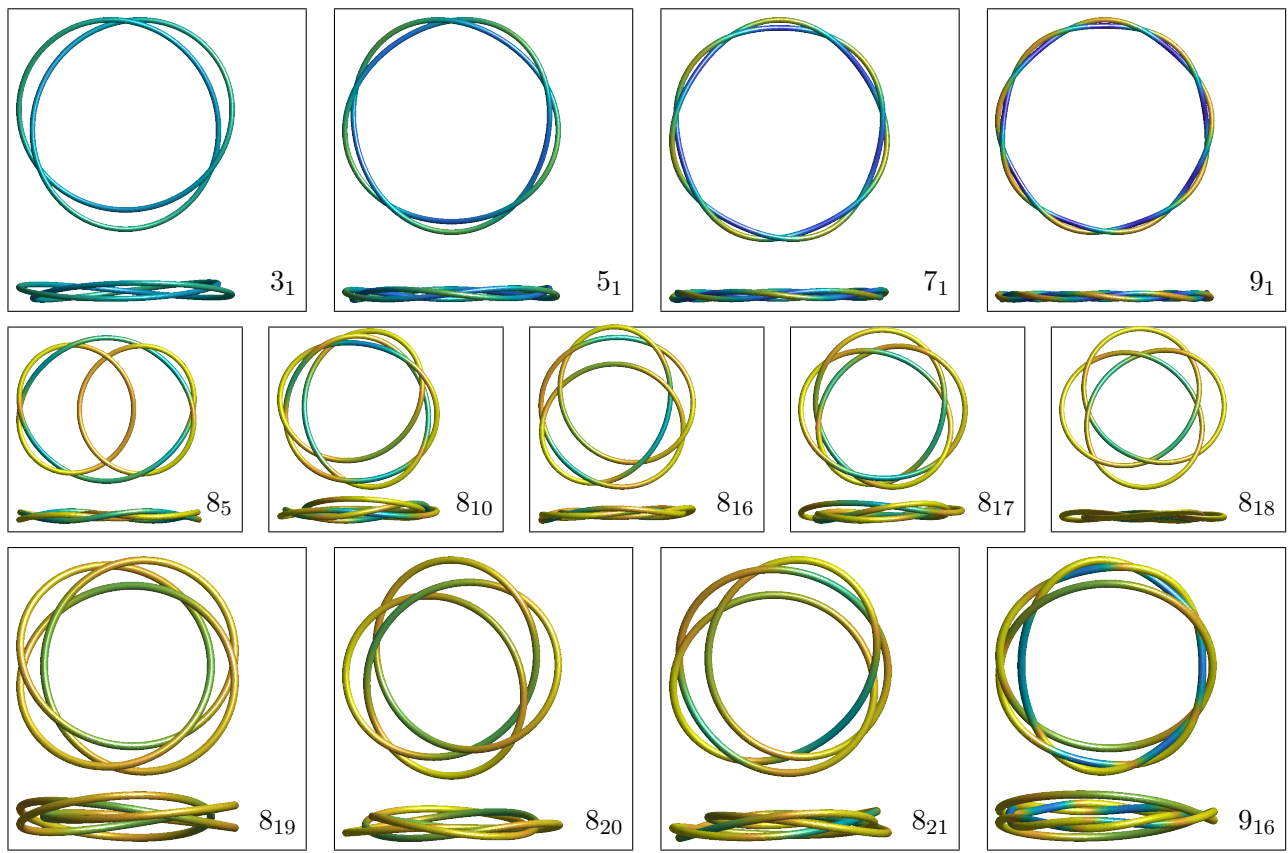
%
\knot3{1}\hfill
\knot5{1}\hfill
\knot7{1}\hfill
\knot9{1}\medskip

\sknot[.19]8{5}\hfill
\sknot8{10}\hfill
\sknot[.19]8{16}\hfill
\sknot[.17]8{17}\hfill
\sknot8{18}\medskip

\knot8{19}\hfill
\knot8{20}\hfill
\knot8{21}\hfill
\knot9{16}

\caption{Approximations of elastic knots for all thirteen BB classes
with crossing number at most nine.
Each curve is shown from top and front view;
colors correspond to local curvature.
The knots in the first row as well as $8_{19}$ are torus knots.}\label{fig:sims}

\end{figure}

\section{BB knot identification in knot families}
\label{knotfamilies}

In this section we will consider knot families including the torus knots, the 2-bridge knots, the alternating Montesinos knots, the non-alternating Montesinos knots, and the Conway algebraic knots. For the torus knots, the 2-bridge knots and the alternating Montesinos knots  we are able to identify all knots within these families that have equal bridge index and braid index. For the non-alternating Montesinos knots and the Conway algebraic knots, we have some partial results. The results for the torus knots and the 2-bridge knots are known, we decide to state them here for the sake of completeness. We would like to point out  that the  proof for the case of 2-bridge knots is new.

\subsection{Torus knots} \label{A}
Every chiral pair of torus knots has a representative that can be presented by a pair of positive integers $p$, $q$ such that $p\ge q\ge2$ and is denoted by $T(p,q)$, with the number of components in  $T(p,q)$ given by $\gcd(p,q)$.  It is well known that $\b(T(p,q))=\br(T(p,q))=q$ \cite{Mu2,S2} or \cite{Sch2007} for a more recent proof. That is, every torus knot has equal bridge index and braid index. It is known that the crossing number of $T(p,q)$ is $(q-1)p$ \cite{Mu2} hence the number of torus knots with a given crossing number $n$ is at most of the order of $n$. In other words, the number of torus knots will not contribute to the exponential growth of knots with equal bridge index and braid index. It is worthwhile for us to point out that when a torus knot has more than one component, it is assumed that all components are assigned parallel orientations. One component BB torus knots with crossing number up to 12 are listed in Table \ref{table} marked with a superscript $^t$. 

\subsection{ 2-bridge knots} 

This case is easy to deal with, since the only knots that can arise a 2-braids are the $T(n,2)$ torus knots. Thus we have by default the following theorem:

\begin{theorem}\label{torustwobridge} 
Let $K=B(\alpha,\beta)$ be a 2-bridge knot. Then $K$ is a BB knot if and only if $K$ is a  $T(n,2)$ torus knot.
\end{theorem}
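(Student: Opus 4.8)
The plan is to reduce the statement to the classification of links of braid index two, which becomes routine once the right facts are assembled. First I would record that a nontrivial $2$-bridge knot $K=B(\alpha,\beta)$ has $\br(K)=2$; this is built into the very notion of a $2$-bridge knot via Schubert's classification. Consequently $K$ is a BB knot precisely when $\b(K)=2$, so the entire theorem collapses to the claim that a $2$-bridge knot has braid index $2$ if and only if it is a $T(n,2)$ torus link.

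The core of the argument is the identification of links of braid index two with closed $2$-braids. By definition of the braid index, $\b(K)=2$ means that $K$ (under some orientation, using the min-over-orientations convention recalled in the introduction) is the closure of a braid on two strands but is not the unknot. Since the braid group on two strands is $B_2=\langle\sigma_1\rangle\cong\Z$, every $2$-braid equals $\sigma_1^{n}$ for some $n\in\Z$, and its closure is by definition the torus link $T(n,2)$ (a knot when $n$ is odd, a two-component link when $n$ is even). Hence $\b(K)=2$ forces $K=T(n,2)$ with $|n|\ge2$, which gives the forward implication.

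For the converse I would invoke the torus-knot result of Subsection~\ref{A}: since $T(n,2)$ has $q=2$, we have $\b(T(n,2))=\br(T(n,2))=2$, so $T(n,2)$ is a BB knot; and $T(n,2)$ is manifestly a $2$-bridge knot, closing the equivalence.

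As for obstacles, there is essentially no deep content here---the paper itself flags the case as easy---and the only point requiring care is the two-component case ($n$ even). There one must use the unoriented braid index as the minimum over orientation assignments, together with the parallel-orientation convention adopted for torus links, so that the relevant closure is indeed that of $\sigma_1^{n}\in B_2$ and the torus-knot computation of Subsection~\ref{A} applies verbatim.
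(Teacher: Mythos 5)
Your argument is correct, and it is in fact the justification the paper itself gives first: immediately before the theorem the authors remark that ``the only knots that can arise as 2-braids are the $T(n,2)$ torus knots,'' so the theorem holds ``by default.'' Your write-up fills in exactly that reasoning ($\br=2$ reduces BB-ness to $\b=2$; $B_2=\langle\sigma_1\rangle$ so every closed $2$-braid is $T(n,2)$; the converse via Subsection~\ref{A}), and your handling of the two-component case via the minimum over orientations is consistent with the convention set out in the introduction. Where you diverge is from the paper's \emph{displayed} proof, which deliberately avoids this shortcut: it instead takes the signed continued-fraction vector $(b_1,\dots,b_{2k+1})$ of a standard $2$-bridge diagram, applies the braid-index formula of Theorem~\ref{2bridge_theorem}, and runs a case analysis on $\sign(b_1)$ and $\sign(b_{2k+1})$ to show that $\b(\vec K)=2$ forces the vector to be $(n)$ or $(-1,-(n-2),-1)$, i.e.\ $K=T(n,2)$ up to mirror image. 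That route is longer and needs the nontrivial input of \cite{DEHL2018}, but the authors choose it because the same bookkeeping with signed vectors and Seifert-circle parities is what drives the classification of BB alternating Montesinos knots (Theorems~\ref{Montesinos_formula} and~\ref{braidequalsbridgeMknot}) and the counting argument in Theorem~\ref{exp_thm}; your more elementary argument is perfectly adequate for the theorem as stated but does not set up that machinery.
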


In the following we give a second proof of the above theorem, introducing a method that will help us to deal with Montesinos knots in the next subsection and will be used in the proof of Theorem \ref{exp_thm}. To explain this method we need to describe the family of 2-bridge knots in more detail.
It is known that every 2-bridge knot can be represented by an alternating diagram associated with two co-prime positive integers $0<\beta<\alpha$ in the following way. A vector $(a_1,a_2,...,a_n)$
is called a {\it standard continued fraction decomposition} of $\frac{\beta}{\alpha}$ if $n$ is odd and all $a_i>0$ and
$$
\frac{\beta}{\alpha}=\frac{1}{a_1+\frac{1}{a_{2}+\frac{1}{.....\frac{1}{a_n}}}}. 
$$
It may be necessary to allow $a_n=1$ in order to guarantee that the length of the vector $(a_1,a_2,...,a_n)$ is odd. Under these conditions the standard continued fraction expansion of $\frac{\beta}{\alpha}$ is unique. An oriented 2-bridge knot (also called a {\em 4-plat} or a {\em rational knot}), denoted by $\vec{K}=\vec{B}(\alpha,\beta)$, is then presented by the {\em standard diagram} as  shown in Figure \ref{2bridgeone} using the vector $(a_1,a_2,...,a_n)$. Furthermore, without loss of generality for a standard diagram we can assign the component corresponding to the long arc at the bottom of Figure \ref{2bridgeone} the orientation as shown, since 2-bridge knots are known to be invertible. When a 2-bridge knot has two components, there are two choices for the orientation of the other component. Usually the two different orientations of the other component lead to two different oriented 2-bridge knots \cite{BZ, cromwell2004}  which may have different braid indices. For example, the braid index of the two-bridge link in Figure \ref{2bridgeone} is ten, however if we re-orient one of the two components the braid index changes to nine, see Theorem \ref{2bridge_theorem} below.

\begin{figure}[htb!]
\includegraphics[scale=1]{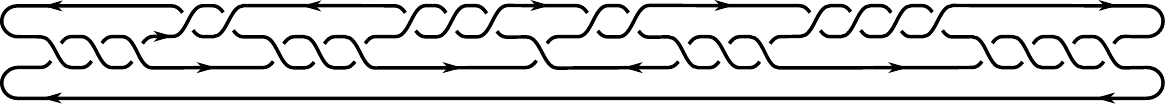}
\caption{The 2-bridge knot $\vec{B}(17426,5075)$ given by $(3,2,3,3,1,2,3,4,4)$.}
\label{2bridgeone}
\end{figure}

Since all crossings corresponding to a given $a_i$ have the same crossing sign under the given orientation, we will define a signed vector $(b_1,b_2,...,b_n)$ where $b_i =\pm a_i$ with its sign given by the crossing sign of the crossings corresponding to $a_i$. For example, for $\vec{K}=\vec{B}(17426,5075)$ with the orientation shown in Figure \ref{2bridgeone} we obtain the signed vector $(3,2,3,3,-1,-2,-3,4,-4)$. In \cite{DEHL2018} the following theorem was established:

\begin{theorem}\label{2bridge_theorem} \cite{DEHL2018} 
Let $\vec{K}=\vec{B}(\alpha,\beta)$ be an oriented 2-bridge link diagram with signed vector  $(b_1,b_2,...,b_{2k+1})$ in the standard form, then its braid index is given by
\begin{equation}\label{2bridgeformula}
\textbf{b}(K)=1+\frac{2+\sign(b_1)+\sign(b_{2k+1})}{4}+\sum_{b_{2j}>0,1\le j\le k}\frac{b_{2j}}{2}+\sum_{b_{2j+1}<0,0\le j\le k}\frac{|b_{2j+1}|}{2}.
\end{equation}
\end{theorem}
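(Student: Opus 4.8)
The plan is to prove the braid-index formula \eqref{2bridgeformula} by reducing the standard alternating diagram of $\vec{B}(\alpha,\beta)$ to a braid, counting Seifert circles, and invoking the Morton--Franks--Williams (MFW) bound together with the HOMFLY polynomial to show the resulting braid is index-minimizing. First I would apply Seifert's algorithm to the oriented standard diagram determined by the signed vector $(b_1,\dots,b_{2k+1})$. The key observation is that the crossings in each block $a_i$ all share a sign, so the smoothing pattern is uniform within a block; the parity of $i$ (horizontal versus vertical twist region) combined with that sign determines whether the Seifert smoothing of the block merges strands or creates nested circles. I would set up bookkeeping for the Seifert circles contributed by each block, so that the total Seifert-circle count matches the right-hand side of \eqref{2bridgeformula}. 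This is where the precise combinatorics of the formula—the correction term $(2+\sign(b_1)+\sign(b_{2k+1}))/4$ accounting for the two end blocks, and the two sums splitting on the sign and parity of the block index—should emerge naturally.

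Next I would establish the lower bound. The Seifert-circle count $s(D)$ of a diagram $D$ gives an upper bound $\mathbf{b}(K)\le s(D)$ by Yamada's theorem, so once the smoothing above is shown to realize $s(D)$ equal to the claimed value, the inequality $\mathbf{b}(K)\le \text{RHS}$ follows. For the matching lower bound I would use the MFW inequality, which bounds $\mathbf{b}(K)$ below in terms of the breadth of the variable-$v$ (framing) degree of the HOMFLY polynomial. For alternating diagrams and their oriented resolutions one can often compute these extremal degrees directly, or appeal to the fact—used in \cite{DEHL2018}—that for 2-bridge links the MFW bound is sharp. The heart of the argument is verifying that the Seifert state produced above is in fact a reduced, appropriately oriented diagram whose Seifert graph has no ``reducible'' configurations that would allow the circle count to drop further; equivalently, that no destabilizing Reidemeister-I or Murasugi--Przytycki reducing move applies beyond those already accounted for in the formula.

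The main obstacle I expect is the lower bound, i.e. proving that the braid obtained from the Seifert algorithm is genuinely minimal rather than merely a convenient presentation. Counting Seifert circles is essentially a local, block-by-block calculation and is routine once the smoothing conventions are fixed; but showing that $\mathbf{b}(K)$ cannot be strictly smaller requires a global invariant. The cleanest route is to quote the sharpness of the MFW inequality for the relevant class—this is exactly what \cite{DEHL2018} supplies—so that the combinatorial count of minimal Seifert circles, after applying all legitimate Murasugi--Przytycki reductions, coincides with the braid index. A subtlety worth isolating is the role of orientation: when $\vec{B}(\alpha,\beta)$ has two components, reversing one component changes the signs $b_i$ and hence both the end-correction term and which sum each block falls into, producing the genuinely different values (ten versus nine in the worked example). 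I would therefore carry the signed vector through every step rather than the unsigned $a_i$, and check that the formula transforms correctly under the allowed orientation reversals, which serves as a useful internal consistency check on the circle-counting bookkeeping.
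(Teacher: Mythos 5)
The paper does not actually prove this statement: Theorem \ref{2bridge_theorem} is imported verbatim from \cite{DEHL2018}, so there is no in-paper argument to compare yours against. Judged on its own terms, your outline follows the same general strategy as the cited reference (upper bound from a Seifert-circle count via Yamada's theorem, lower bound from the Morton--Franks--Williams inequality applied to the HOMFLY polynomial), but it contains one concrete error and one large deferral.

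The error: you assert that the Seifert-circle count of the standard alternating diagram ``matches the right-hand side of \eqref{2bridgeformula}.'' It does not. In the standard 4-plat diagram an antiparallel twist region with $|b_i|$ crossings contributes on the order of $|b_i|-1$ small Seifert circles, so the raw count grossly exceeds the formula (compare $\vec{B}(17426,5075)$ in Figure \ref{2bridgeone}, whose braid index is $10$). The halved terms $b_{2j}/2$ and $|b_{2j+1}|/2$ in the formula arise only \emph{after} applying Murasugi--Przytycki-type reduction moves that convert the standard diagram into a non-alternating, non-minimal diagram with fewer Seifert circles; the present paper says exactly this in the remark following Corollary \ref{non_un_cor}. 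You do mention such reductions later, but your bookkeeping must be set up for the reduced diagram from the start, and you must prove that the stated count is what those reductions actually achieve --- that is the main combinatorial content of the upper bound, not a routine local calculation. The deferral: the lower bound rests entirely on sharpness of the MFW inequality for this class, which you propose to ``quote.'' That sharpness is itself the hard half of the theorem in \cite{DEHL2018} (an explicit computation of the $v$-breadth of the HOMFLY polynomial of an alternating link in terms of the reduced diagram), so as written your proposal assumes the substance of what is to be proved.
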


The above theorem allows the computation of the braid index of any 2-bridge link using a minimal diagram. The computation of the braid index of an oriented 2-bridge link was first given by Murasugi \cite{Mu} using a different method depending on a continued fraction expansion of $\beta/\alpha$ using only even integers. That the two methods are equivalent was shown in \cite{DEHL2018}. The formulation of Theorem \ref{2bridge_theorem} allows us to prove Theorem \ref{torustwobridge} in a way that can be generalized to Montesinos knots, see the next subsection.

\begin{proof}
The only if part is trivially true since every torus knot has equal bridge index and braid index as we discussed in Subsection \ref{A}. Now, if $K=B(\alpha,\beta)$ is a BB knot, then we must have $\b(K)=2$ hence there exists an oriented version $\vec{K}=\vec{B}(\alpha,\beta)$ such that $\b(\vec{K})=2$. By (\ref{2bridgeformula}) we have 
$$
2=1+\frac{2+\sign(b_1)+\sign(b_{2k+1})}{4}+\sum_{b_{2j}>0,1\le j\le k}\frac{b_{2j}}{2}+\sum_{b_{2j+1}<0,0\le j\le k}\frac{|b_{2j+1}|}{2},
$$
where  $(b_1,b_2,...,b_{2k+1})$ is the signed vector of $\vec{K}=\vec{B}(\alpha,\beta)$. Notice that $\sign(b_1)=\sign(b_2)$ if $b_2\not=0$. Furthermore, if $b_3>0$ then $b_2$ is even hence $b_2>1$. A proof of this can be found in \cite{DEHL2018}, a reader can also prove this directly by considering the Seifert circle decomposition of $\vec{K}$. We will prove the theorem in two separate cases:  $\sign(b_1)=1$ and $\sign(b_1)=-1$. 

Case 1. $\sign(b_1)=1$. If $\sign(b_{2k+1})=1$, then
$$
0=\sum_{b_{2j}>0,1\le j\le k}\frac{b_{2j}}{2}+\sum_{b_{2j+1}<0,0\le j\le k}\frac{|b_{2j+1}|}{2}\ge \frac{b_{2}}{2}\ge 0,
$$
hence $b_2=0$. So $k=0$ and $K=T(n,2)$ for some integer $n\ge 2$. On the other hand, if $\sign(b_{2k+1})=-1$, then 
\begin{eqnarray*}
&&1+\frac{2+\sign(b_1)+\sign(b_{2k+1})}{4}+\sum_{b_{2j}>0,1\le j\le k}\frac{b_{2j}}{2}+\sum_{b_{2j+1}<0,0\le j\le k}\frac{|b_{2j+1}|}{2}\\
&\ge &
2+\sum_{b_{2j}>0,1\le j\le k}\frac{b_{2j}}{2}+\sum_{b_{2j+1}<0,0\le j\le k-1}\frac{|b_{2j+1}|}{2}> 2,
\end{eqnarray*}
which is a contradiction hence this case is not possible.

Case 2. $\sign(b_1)=-1$. If $b_2=0$, then 
\begin{eqnarray*}
2&=&1+\frac{2+\sign(b_1)+\sign(b_{2k+1})}{4}+\sum_{b_{2j}>0,1\le j\le k}\frac{b_{2j}}{2}+\sum_{b_{2j+1}<0,0\le j\le k}\frac{|b_{2j+1}|}{2}\\
&= &
1+\frac{|b_{1}|}{2},
\end{eqnarray*}
hence $b_1=-2$ and $K$ is the Hopf link that is the mirror image of $T(2,2)$ discussed in Case 1 if we ignore the orientation. If $b_2\not=0$, then $b_2<0$ and it is necessary that $b_1=-1$ and $b_3<0$ in this case (again this can be observed by considering the Seifert circle decomposition of $K$). It follows that 
\begin{eqnarray*}
2&=&1+\frac{2+\sign(b_1)+\sign(b_{2k+1})}{4}+\sum_{b_{2j}>0,1\le j\le k}\frac{b_{2j}}{2}+\sum_{b_{2j+1}<0,0\le j\le k}\frac{|b_{2j+1}|}{2}\\
&\ge &
1+\frac{1+\sign(b_{2k+1})}{4}+\frac{1+|b_{3}|}{2}.
\end{eqnarray*}
Thus we must have $b_3=-1$ and $\sign(b_{2k+1})=-1$. However $b_3=-1$ implies either $b_4=0$ or $b_4>0$. Since $b_4>0$ leads to $\b(\vec{K})>2$, we must have $b_4=0$, hence $k=1$ and the signed vector of $K$ is of the form $(-1,-(n-2),-1)$ for some $n>2$. This is the mirror image of the torus knot $T(n,2)$ discussed in Case 1 above if we ignore the orientation.
\end{proof}

\subsection {Alternating Montesinos knots} We now consider the set of all alternating Montesinos knots, a family that is much larger than the family of 2-bridge knots. In general, a Montesinos knot $K=M(\beta_1/\alpha_1,\ldots, \beta_k/\alpha_s,\delta)$ is a knot with a diagram as shown in Figure \ref{Montesinos}, where each diagram within a topological circle (which is only for the illustration and not part of the diagram) is a rational tangle $A_j$ that corresponds to some rational number $\beta_j/\alpha_j$ with $|\beta_j/\alpha_j|<1$ and $1\le j\le s$ for some positive integer $s\ge 2$, and $\delta$ is an integer that stands for an arbitrary number of half-twists, see Figure \ref{Montesinos}.

\begin{figure}[htb!]
\includegraphics[scale=.4]{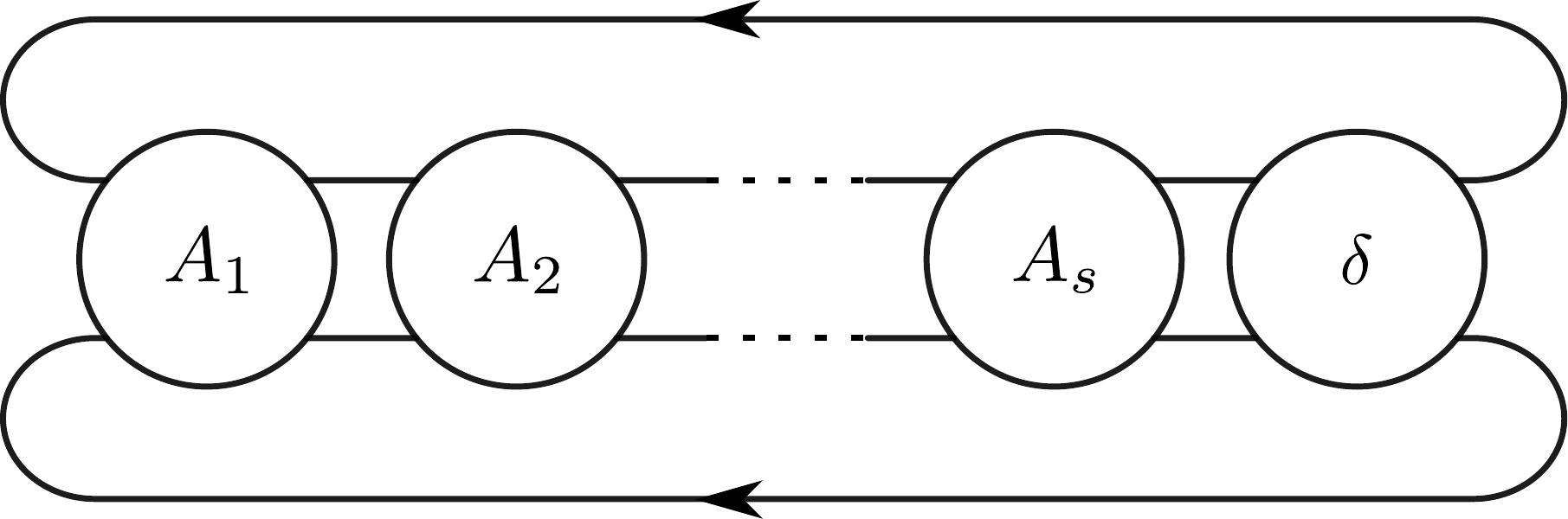}
\caption{A diagram depicting a general Montesinos knot with $s$ rational tangles and $\delta$ horizontal half-twists. The arrows indicate the potential orientation assignments if the knot is to be oriented.}
\label{Montesinos}
\end{figure}

The bridge index of a general Montesinos knot (alternating or nonalternating) is known and given by the following theorem.

\begin{theorem} \cite{BoZi} 
\label{bridgeMknot}
Let $K=M(\beta_1/\alpha_1,\ldots, \beta_s/\alpha_s,\delta)$ be a Montesinos knot with $|\beta_j/\alpha_j|<1$, $1\le j\le s$, then 
$\br(K)=s$.
\end{theorem}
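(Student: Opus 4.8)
The plan is to prove the two bounds $\br(K)\le s$ and $\br(K)\ge s$ separately, the second being the substantial one.

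For the upper bound I would produce an explicit bridge presentation with exactly $s$ maxima. Each rational tangle $A_j$ can be isotoped into a standard ``plat'' form in which its two strands ascend to a single common local maximum before descending into the tangle box; with respect to the vertical height function of Figure~\ref{Montesinos}, the outer closure arcs and the horizontal band of $\delta$ half-twists can then be arranged, after a suitable isotopy, so as to introduce no further maxima. Reading the resulting diagram from top to bottom exhibits $K$ in $s$-bridge position, whence $\br(K)\le s$. This direction is routine and uses only the normal form of rational tangles and the planar layout of the Montesinos diagram.

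For the lower bound I would pass to the knot group $G=\pi_1(S^3\setminus K)$ and invoke the elementary inequality
\[ \mu(K)\le\br(K), \]
where $\mu(K)$ is the \emph{meridional rank}, the least number of meridians needed to generate $G$; this holds because a $b$-bridge presentation furnishes $b$ meridional generators. It then suffices to establish $\mu(K)\ge s$, for combined with the upper bound this forces $\mu(K)=\br(K)=s$.

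The estimate $\mu(K)\ge s$ is where the genuine difficulty lies, and it is the step I expect to be the main obstacle. Here I would use the double branched cover $\Sigma_2(K)$, which under the hypothesis $|\beta_j/\alpha_j|<1$ (forcing $\alpha_j\ge2$) is a Seifert fibered space over $S^2$ with exactly $s$ genuine exceptional fibers of multiplicities $\alpha_1,\dots,\alpha_s$. A generating set of $r$ meridians of $K$ is invariant under the covering involution and descends to a constrained generating set for the orbifold fundamental group of the base $2$-sphere with $s$ cone points; the torsion carried by the exceptional fibers together with the peripheral (meridional) nature of the generators is what prevents fewer than $s$ meridians from generating $G$, as worked out by Boileau and Zieschang~\cite{BoZi}. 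I would emphasize that the cruder alternative of bounding the Heegaard genus of $\Sigma_2(K)$ from below and applying $g(\Sigma_2(K))\le\br(K)-1$ does \emph{not} close the argument, since for some Montesinos links this genus is strictly less than $s-1$; it is precisely this phenomenon that makes the finer meridional-rank analysis---controlling which generating sets may consist of meridians---indispensable.
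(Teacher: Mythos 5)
The paper contains no proof of this statement: Theorem~\ref{bridgeMknot} is imported verbatim from Boileau--Zieschang~\cite{BoZi}, so there is no internal argument to compare yours against. Your outline does faithfully reproduce the strategy of that reference: the easy inequality $\br(K)\le s$ by isotoping the diagram into an $s$-bridge position in which each rational tangle accounts for exactly one local maximum, and the hard inequality via the meridional rank, $\mu(K)\le\br(K)$, combined with a lower bound on $\mu(K)$ coming from the Seifert-fibered/orbifold structure of the double branched cover with its $s$ exceptional fibers. Your caveat that the cruder route through the Heegaard genus of $\Sigma_2(K)$ does not close the argument, and that one must control generating sets consisting specifically of meridians, is precisely the point of~\cite{BoZi} and is well taken.

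That said, as a standalone proof your attempt has a gap exactly at the step you yourself flag as the main obstacle: the inequality $\mu(K)\ge s$ is not argued but attributed (``as worked out by Boileau and Zieschang''), so the entire substance of the theorem is still being cited rather than proved. Since the paper does the same, this is not a defect relative to the paper, but you should be clear that in your write-up the heart of the matter --- why a generating set of $r$ meridians, descending to torsion elements of the base orbifold group with $s$ cone points, forces $r\ge s$ --- remains unestablished. One small slip in the upper-bound paragraph: two disjoint strands of a tangle cannot ``ascend to a single common local maximum''; what you mean (and what is true) is that each rational tangle can be positioned so that its two arcs contribute exactly one local maximum in total, e.g.\ one arc carrying a single maximum and the other monotone in the height function.
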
 

An explicit formula for the braid index of any alternating Montesinos knot is a rather new result \cite{DEHL2018}. The discussion from here to (and including) Theorem \ref{Montesinos_formula} is modified from \cite{DEHL2018}, as it is needed in order to understand the concepts and the formulas used in Theorem \ref{Montesinos_formula}. 

Let $\vec{K}=\vec{M}(\beta_1/\alpha_1,\ldots, \beta_s/\alpha_s,\delta)$ be an oriented Montesinos knot. Following \cite{DEHL2018}, we will use the following conventions on $\vec{K}$.
If $\vec{K}$ is alternating then all fractions $\beta_j/\alpha_j$ have the same sign and this is matched by the sign of $\delta$ representing the $|\delta|$ half-twists. As in the case of 2-bridge knots the sign of $\delta$ and the $\beta_j/\alpha_j$ should not be confused with the sign of individual crossings. For example, the signs of the crossings represented by $\delta$ may not coincide with the sign of $\delta$. The sign of the crossings represented by $\delta$ depends on the orientations of the two strings in the $\delta$-half twists. Since a knot and its mirror image have the same braid index, for alternating Montesinos knots we only need to consider the case  $\beta_j/\alpha_j>0$ for each $j$.

and that the crossings in the tangle diagrams are as chosen in the standard drawings of 2-bridge knots as shown in Figure \ref{tangle}. The conclusion we reach will then be applicable to the case $\beta_j/\alpha_j<0$ for each $j$ that mirrors the Montesinos knots discussed below. Furthermore, if $s=2$,  the a Montesinos knot is actually a 2-bridge knot and thus we only need to consider the case $s\ge 3$. 
For our purpose, we can always orient the top long strand in a Montesinos knot diagram from right to left as shown in Figure \ref{Montesinos} since reversing the orientations of all components in a knot does not change its braid index. 

We will use a standard drawing for each rational tangle $A_j$ which is given by the continued fraction of the rational number $\beta_j/\alpha_j$ and contains an odd number of positive entries, exactly like what we did in the case of 2-bridge knots. That is, we assume that $0<\beta_j<\alpha_j$ and $\beta_j/\alpha_j$ has a continued fraction decomposition of the form $(a_{1}^j,a^j_2,...,a_{2q_j+1}^j)$. The four strands that entering/exiting each tangle are marked as NW, NE, SW and SE. One example is shown at the left of Figure \ref{tangle}. Here we note that $a_{2q_j+1}^j$ is allowed to equal one if needed to ensure that the vector $(a_{1}^j,a^j_2,...,a_{2q_j+1}^j)$ has odd length. We have
$$
\frac{\beta_j}{\alpha_j}=\frac{1}{a_1^j+\frac{1}{a_2^j+\frac{1}{.....\frac{1}{a_{2q_j+1}^j}}}}. 
$$
The closure of a rational tangle is obtained by connecting its NW and SW end points by a strand and connecting its NE and SE end points with another strand (as shown at the left side of Figure \ref{tangle}). This closure is called the denominator $D(A_j)$ of the rational tangle $A_j$. Notice that $D(A_j)$ results in a normal standard diagram of  the oriented 2-bridge knot $\vec{B}(\alpha_j, \beta_j)$ given by the vector $(a_{1}^j,a^j_2,...,a_{2q_j+1}^j)$ (as shown at the right side of Figure \ref{tangle}). Finally, we define $(b_{1}^j,b^j_2,...,b_{2q_j+1}^j)$ by $|b^j_m|=a^j_m$ with its sign matching the signs of the corresponding crossings in $\vec{K}$ under the given orientation. Notice that $(b_{1}^j,b^j_2,...,b_{2q_j+1}^j)$ may be different from that defined for 2-bridge knots since the orientations of the strands here are inherited from $\vec{K}$. We will use the notation $A_j(b_{1}^j,b^j_2,...,b_{2q_j+1}^j)$ to denote the tangle $A_j$ and the signed vector associated with it that is inherited from the orientation of $\vec{K}$. Notice further that in a standard Montesinos knot diagram, each rational tangle has at the bottom a vertical row of $a_1$ twists corresponding to the condition $|\beta_j/\alpha_j|<1$. (Tangles with $|\beta_j/\alpha_j|\ge1$ end with a row of horizontal twists on the right, and these twists can be combined via flypes with the $\delta$ half twists to reduce the tangle to $|\beta_j/\alpha_j| \mod(1)$.)

\begin{figure}[htb!]
\includegraphics[]{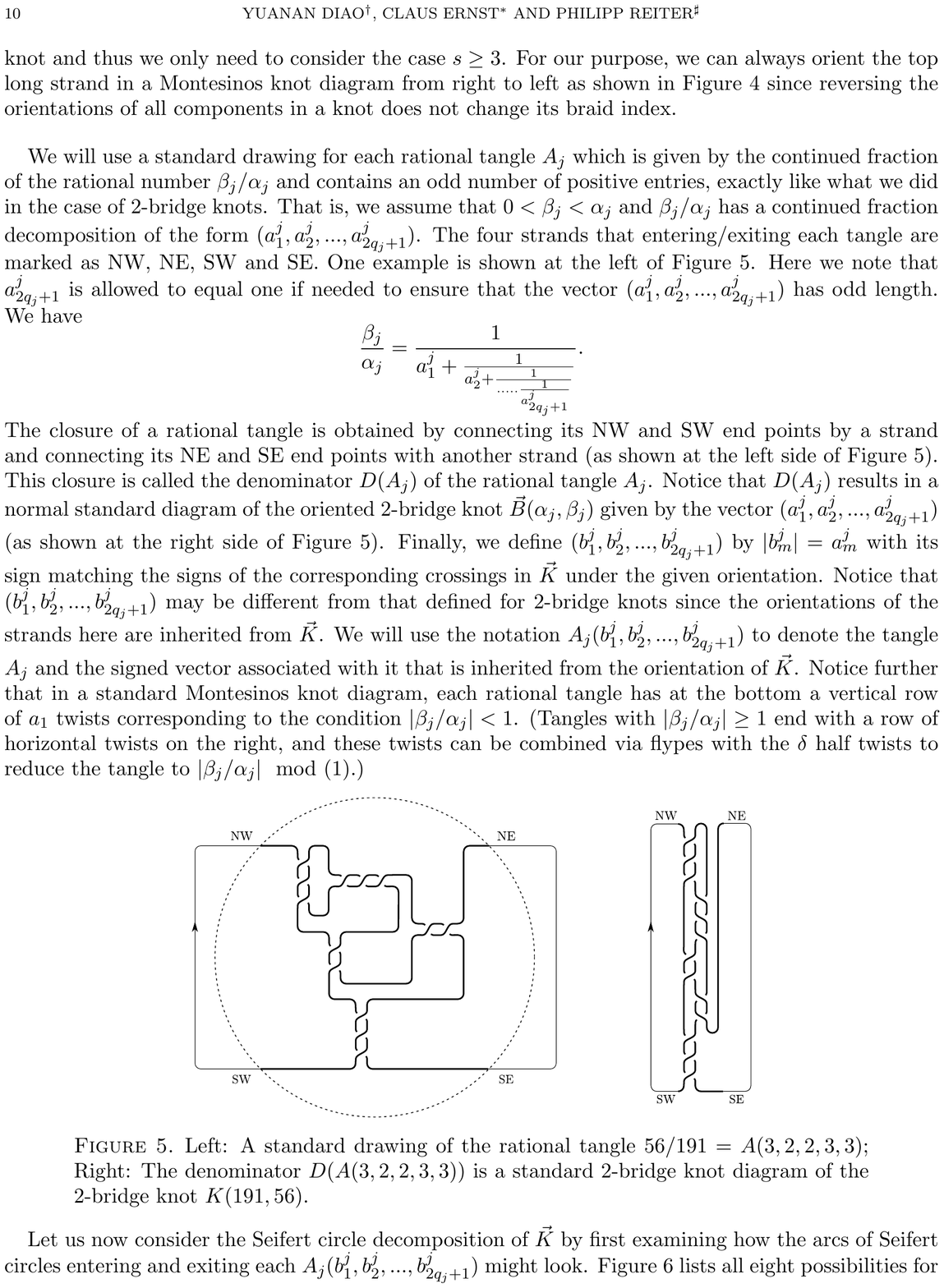}
\caption{Left: A standard drawing of the rational tangle $56/191 = A(3,2,2,3,3)$; Right: The denominator $D(A(3,2,2,3,3))$ is a standard 2-bridge knot diagram of the 2-bridge knot $K(191,56)$.}
\label{tangle}
\end{figure}

Let us now consider the Seifert circle decomposition of $\vec{K}$ by first examining how the arcs of Seifert circles entering and exiting each $A_j(b_{1}^j,b^j_2,...,b_{2q_j+1}^j)$ might look. Figure \ref{decomp} lists all eight possibilities for these arcs. Small Seifert circles within each tangle are not shown in Figure \ref{decomp}. Observing (from Figure \ref{tangle}) that the SW and SE strands meet at the last crossing in $b_{1}^j$, therefore if these two strands belong to two different Seifert circles, then they must have parallel orientation. Thus (vi) and (viii) are not possible. Furthermore, since we have assigned the top long arc in the Montesinos knot diagram the orientation from right to left, (iii) is not possible either. We say that $A_j$ is of {\em Seifert Parity 1} if it decomposes as (i) in Figure \ref{decomp}, of {\em Seifert Parity 2} if it decomposes as (ii) or (iv) in Figure \ref{decomp} and of {\em Seifert Parity 3} if it decomposes as (v) or (vii) in Figure \ref{decomp}. Note that the Seifert Parity of a tangle $A_j$ is not a property of the tangle alone, but depends on the structure of the diagram that contains $A_j$. (It should not be confused with the the term {\em parity of a tangle}, which denotes how the arcs in a tangle are connected.)

\begin{figure}[htb!]
\includegraphics{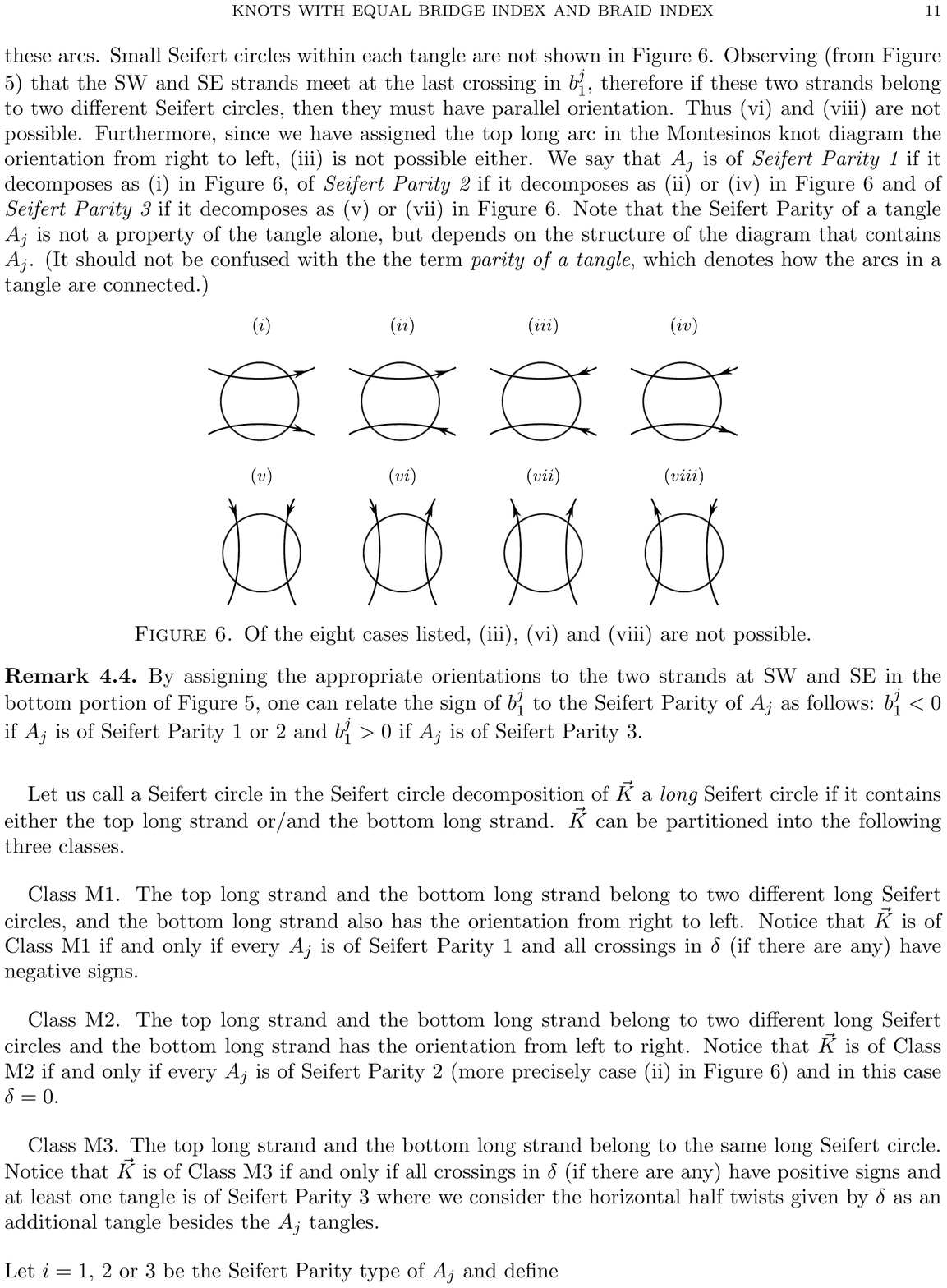}
\caption{Of the eight cases listed, (iii), (vi) and (viii) are not possible.}
\label{decomp}
\end{figure}

\medskip
\begin{remark}\label{bj_sign}{\em 
By assigning the appropriate orientations to the two strands at SW and SE in the bottom portion of Figure \ref{tangle}, one can relate the sign of $b_1^j$ to the Seifert Parity of $A_j$ as follows: $b_1^j<0$ if $A_j$ is of Seifert Parity 1 or 2 and  
$b_1^j>0$ if $A_j$ is of Seifert Parity 3.}
\end{remark}

Let us call a Seifert circle in the Seifert circle decomposition of $\vec{K}$ a {\em long} Seifert circle if it contains either the top long strand or/and the bottom long strand. $\vec{K}$ can be partitioned into the following three classes. 

Class M1. The top long strand and the bottom long strand belong to two different long Seifert circles, and the bottom long strand also has the orientation from right to left. Notice that $\vec{K}$ is of Class M1 if and only if every $A_j$ is of Seifert Parity 1 and all crossings in $\delta$ (if there are any) have negative signs.

Class M2. The top long strand and the bottom long strand belong to two different long Seifert circles and the bottom long strand has the orientation from left to right. Notice that $\vec{K}$ is of Class M2 if and only if every $A_j$ is of Seifert Parity 2 (more precisely case (ii) in Figure \ref{decomp}) and in this case $\delta=0$.

Class M3. The top long strand and the bottom long strand belong to the same long Seifert circle. Notice that $\vec{K}$ is of Class M3 if and only if all crossings in $\delta$ (if there are any) have positive signs and at least one tangle is of Seifert Parity 3 where we consider the horizontal half twists given by $\delta$ as an additional tangle besides the $A_j$ tangles.

\noindent
Let $i=1$, 2 or 3 be the Seifert Parity type of $A_j$ and define 

\begin{eqnarray}
\Delta_1(A_j)&=&\Delta_3(A_j)=\frac{(-1+\sign(b^j_{2q_j+1}))}{4}+\Delta(A_j), \label{Deltaoneorthree}\\
\Delta_2(A_j)&=&\frac{(2+\sign(b^j_1)+\sign(b^j_{2q_j+1}))}{4}+\Delta(A_j),\label{Deltatwo}
\end{eqnarray}
where 
$$
\Delta(A_j)=\sum_{b^j_{2m}>0,1\le m\le q_j}b^j_{2m}/2+\sum_{b^j_{2m+1}<0,0\le m\le q_j}|b^j_{2m+1}|/2.
$$

The braid index for an oriented and alternating Montesinos knot $\vec{K}$ is  given by the following theorem.

\begin{theorem}\label{Montesinos_formula} \cite{DEHL2018} 
Let $\vec{K}=\vec{M}(\beta_1/\alpha_1,\ldots, \beta_s/\alpha_s,\delta)=\vec{M}(A_1,A_2,\ldots, A_s,\delta)$ be an oriented and alternating Montesinos knot with a normal standard diagram and the signed vector $(b_{1}^j,b^j_2,...,b_{2q_j+1}^j)$ for $A_j$, then we have
\begin{eqnarray*}
\b(\vec{K})&=&2+\sum_{1\le j\le s}\Delta_1(A_j)\ {\rm{if}}\ \vec{K}\ {\rm{belongs\ to\ Class\ M1}};\\%
\b(\vec{K})&=&1+\sum_{1\le j\le s}\Delta_2(A_j) \ {\rm{if}}\ \vec{K}\ {\rm{belongs\ to\ Class\ M2}};\\%
\b(\vec{K})&=&\Delta_0(\vec{K})+\sum_{A_j \in \Omega_2}\Delta_2(A_j)+\sum_{A_j \in \Omega_3}\Delta_3(A_j) \ {\rm{if}}\ \vec{K}\ {\rm{belongs\ to\ Class\ M3}},%
\end{eqnarray*}
where $\Omega_2$, $\Omega_3$ are the sets of $A_j$'s that have Seifert Parity 2 and 3 respectively, $\Delta_0(\vec{K})=\eta+\delta-\min\{(\eta+\delta)/2-1,\delta\}$ and $\eta=\vert \Omega_3\vert$. 
\end{theorem}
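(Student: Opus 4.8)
The plan is to prove the formula by establishing matching upper and lower bounds for $\b(\vec{K})$, with Yamada's theorem as the central tool: the braid index equals the minimal number of Seifert circles taken over all diagrams of the link. The upper bound I would obtain constructively, by transforming the given standard alternating diagram into one whose number of Seifert circles equals the claimed value; the lower bound I would extract from the Morton--Franks--Williams (MFW) inequality applied to the HOMFLY polynomial, together with an argument that this inequality is sharp for the diagrams at hand. As a consistency check on the counting, the $s=2$ specialization should recover the 2-bridge formula of Theorem \ref{2bridge_theorem}.

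For the upper bound I would work directly with the Seifert circle decomposition of the standard diagram, exploiting the tangle-by-tangle analysis already set up through the Seifert Parity classification. Each rational tangle $A_j$, read off from its odd-length continued fraction vector $(b_1^j,\dots,b_{2q_j+1}^j)$, contributes a predictable collection of Seifert circles: the anti-parallel twist regions (precisely the even-indexed positive entries and the odd-indexed negative entries counted by $\Delta(A_j)$) produce nested chains of small Seifert circles, while the parallel twist regions merely carry strands through. I would then apply Seifert-circle reduction moves (in the sense of Murasugi--Przytycki and Vogel) to merge or eliminate circles, checking that the reductions available inside $A_j$ leave it contributing $\Delta_1(A_j)$, $\Delta_2(A_j)$, or $\Delta_3(A_j)$ strands according to its Seifert Parity, with the sign of $b_1^j$ controlled as in Remark \ref{bj_sign}. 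The global bookkeeping---how the long Seifert circles carrying the top and bottom strands merge across tangles, and how the $\delta$ half-twists interact---is encoded in the additive constants ($2$ in Class M1, $1$ in Class M2) and in the term $\Delta_0(\vec{K})$ for Class M3. Summing these contributions yields a diagram realizing the claimed number of Seifert circles, hence $\b(\vec{K})\le$ the stated value.

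For the lower bound I would turn to the MFW inequality, $\b(L)\ge\tfrac12\,\mathrm{breadth}_v P_L(v,z)+1$, and show that the Seifert-circle count produced above already saturates it. The idea is to bound from below the $v$-breadth of the HOMFLY polynomial of the alternating Montesinos link. Because the diagram is reduced and alternating, its extreme $v$-degree terms are governed by the Seifert-circle structure, and I would argue that after the reductions of the previous step the coefficients of the top and bottom $v$-degree monomials do not cancel, which forces $\mathrm{breadth}_v P_L\ge 2(\text{formula}-1)$ and hence $\b(\vec{K})\ge\text{formula}$. Concretely, I would track the highest and lowest $v$-degree contributions through a state-sum/skein expansion of the diagram, using the alternating hypothesis to control signs and rule out cancellation.

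The main obstacle is the sharpness of the MFW inequality, since it is known to fail to be sharp for general links, and even for some alternating ones, so genuine use must be made of the particular reduced form of the Montesinos diagram. One has to verify that once every admissible Seifert-circle reduction has been performed, no configuration remains that would permit a further drop in the MFW bound, while simultaneously the extremal HOMFLY coefficients survive. Establishing this non-cancellation---most delicately across the interfaces between tangles and through the $\delta$-twist region in Class M3, where the correction $\Delta_0(\vec{K})=\eta+\delta-\min\{(\eta+\delta)/2-1,\delta\}$ originates---is where the real work lies; the tangle-internal counting that produces the $\Delta(A_j)$ terms is, by comparison, routine once the Seifert Parity framework is in place.
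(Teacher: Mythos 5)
This paper does not actually prove Theorem \ref{Montesinos_formula}: the statement is imported verbatim from \cite{DEHL2018}, so there is no in-paper argument to compare yours against line by line. Measured against the strategy of that reference, your road map is the right one: the upper bound does come from exhibiting a diagram realizing the stated Seifert-circle count via reduction moves in the spirit of Murasugi--Przytycki and Yamada, and the lower bound does come from the Morton--Franks--Williams inequality, whose sharpness for the reduced diagram is the crux.

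What you have written, however, is a plan rather than a proof, and the missing part is precisely the content of the theorem. You never derive the quantities $\Delta_1$, $\Delta_2$, $\Delta_3$ --- in particular the quarter-integer corrections $\frac{-1+\sign(b^j_{2q_j+1})}{4}$ and $\frac{2+\sign(b^j_1)+\sign(b^j_{2q_j+1})}{4}$, which record exactly which reduction moves at the top and bottom of each tangle are available and whether they conflict --- nor the Class M3 term $\Delta_0(\vec{K})=\eta+\delta-\min\{(\eta+\delta)/2-1,\delta\}$, whose $\min$ encodes a global constraint on how many of the $\delta$ half-twist crossings can be absorbed by reductions that also consume the Seifert-Parity-3 tangles. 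Your outline labels this bookkeeping ``routine,'' but it is the theorem. You also correctly name the two genuinely hard steps --- (a) that after your reductions no further Seifert circles can be merged, and (b) that the extremal $v$-degree coefficients of the HOMFLY polynomial survive without cancellation so that MFW is sharp at exactly the reduced count --- and then explicitly defer both. Without (b) the lower bound is absent altogether, and since MFW sharpness is known to fail for some links, it cannot simply be asserted for this family. In short: right approach, but essentially none of the argument has been carried out.
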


Using Theorem \ref{Montesinos_formula} we can now identify the alternating Montesinos knots where the bridge index equals the braid index.

\begin{theorem}
\label{braidequalsbridgeMknot}
Let $ \vec{K}=\vec{M}(\beta_1/\alpha_1,\ldots, \beta_s/\alpha_s,\delta)$ be an oriented and alternating Montesinos knot. Then $\b( \vec{K})=\br( K)=s$ if and only if the following conditions hold:

(i) $\vec{K}$ is of Class M3.

(ii) $\eta\ge \delta+2$ where $\eta=\vert \Omega_3\vert$ is the number of tangles $A_j$ with Seifert Parity 3.

(iii) If $A_j$  has Seifert Parity 2 then $A_j=(-1,-s_j,-1)$ for some $s_j\ge 0$. If $A_j$  has Seifert Parity 3 then $A_j=(s_j)$ for some $s_j> 0$.
\end{theorem}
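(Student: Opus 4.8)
The plan is to combine the bridge-index value $\br(K)=s$ from Theorem~\ref{bridgeMknot} with the braid-index formula of Theorem~\ref{Montesinos_formula}, exploiting the general inequality $\b(\vec K)\ge\br(K)=s$ noted in the introduction. Thus $\vec K$ is BB exactly when $\b(\vec K)=s$, and the whole problem reduces to deciding, class by class (M1, M2, M3), when the relevant formula attains the value $s$. The engine throughout will be sharp per-tangle lower bounds for $\Delta_1(A_j)$, $\Delta_2(A_j)$, $\Delta_3(A_j)$, obtained from the local sign constraints on the signed vectors $(b_1^j,\dots,b_{2q_j+1}^j)$: that $\sign(b_m^j)$ is tied to $\sign(b_{m+1}^j)$ and that $b_{2m+1}^j>0$ forces $b_{2m}^j$ to be even and positive (hence $\ge2$) — the rules already used in the proof of Theorem~\ref{torustwobridge} — together with Remark~\ref{bj_sign}.

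First I would record the two minima driving the Parity-2 and Parity-3 contributions. For a Parity-3 tangle $b_1^j>0$ does not contribute to $\Delta(A_j)$, and a short case analysis on $\sign(b_{2q_j+1}^j)$ using the sign rules gives $\Delta_3(A_j)\ge0$, with equality precisely when $q_j=0$, i.e. $A_j=(s_j)$ with $s_j>0$. For a Parity-2 tangle $b_1^j<0$ contributes at least $1/2$, and the same analysis gives $\Delta_2(A_j)\ge1$, with equality precisely when $A_j=(-1,-s_j,-1)$. These two statements are exactly condition~(iii). As an immediate corollary, in Class M2 the formula gives $\b(\vec K)=1+\sum_j\Delta_2(A_j)\ge1+s>s$, so no Class M2 knot is BB.

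The core is Class M3, where every tangle has Seifert Parity $2$ or $3$, so $|\Omega_2|=s-\eta$. Inserting the minima above into the M3 formula yields
\[ \b(\vec K)=\Delta_0(\vec K)+\sum_{A_j\in\Omega_2}\Delta_2(A_j)+\sum_{A_j\in\Omega_3}\Delta_3(A_j)\ \ge\ \Delta_0(\vec K)+(s-\eta), \]
with equality exactly under~(iii). Since $\Delta_0(\vec K)-\eta=\delta-\min\{(\eta+\delta)/2-1,\delta\}$, the right-hand side equals $s+\delta-\min\{(\eta+\delta)/2-1,\delta\}$. This is $\ge s$, and equals $s$ if and only if the minimum is attained by $\delta$, i.e. $\delta\le(\eta+\delta)/2-1$, which rearranges to $\eta\ge\delta+2$ — precisely condition~(ii); when $\eta\le\delta+1$ one instead gets $\b(\vec K)\ge s+\tfrac12>s$. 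Hence within Class M3 we have $\b(\vec K)=s$ iff~(ii) and~(iii) hold, giving both directions of the theorem once~(i) is known.

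It remains to rule out Class M1, and this is where I expect the main difficulty. The naive bookkeeping only gives $\Delta_1(A_j)\ge\tfrac12$ (attained, for instance, by the signed vectors $(-2)$ and $(-1,-m,-1)$), which for $s\ge4$ is too weak to force $\b(\vec K)=2+\sum_j\Delta_1(A_j)>s$. The resolution must come from a global constraint special to Class M1: because the top and bottom long strands lie in two distinct long Seifert circles, both oriented from right to left, the signs $\sign(b_{2q_j+1}^j)$ of the topmost crossings of each tangle are not free but are pinned down by the orientation of the top long strand. The plan is to show that this forces every Parity-1 tangle into the expensive case $b_{2q_j+1}^j>0$ (whence $b_{2q_j}^j\ge2$ and $\Delta_1(A_j)\ge\tfrac32$), so that $\b(\vec K)\ge2+\tfrac32 s>s$ and Class M1 contributes no BB knots, establishing the necessity of~(i). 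Making this sign-pinning argument rigorous — equivalently, showing that the apparent minimal configurations cannot in fact be globally oriented as Class~M1 — is the step I expect to require the most careful analysis of the Seifert-circle decomposition in Figure~\ref{decomp}.
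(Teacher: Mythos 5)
Your treatment of Classes M2 and M3 tracks the paper's proof essentially step for step: the per-tangle minima $\Delta_2(A_j)\ge 1$ with equality exactly for $(-1,-s_j,-1)$ and $\Delta_3(A_j)\ge 0$ with equality exactly for $(s_j)$ are precisely the paper's derivation of condition (iii); the computation $\Delta_0(\vec K)+(s-\eta)=s+\delta-\min\{(\eta+\delta)/2-1,\delta\}$ and its reduction to $\eta\ge\delta+2$ is the paper's derivation of condition (ii); and your elimination of Class M2 via $\b(\vec K)=1+\sum_j\Delta_2(A_j)\ge 1+s$ is a mild repackaging of the paper's observation that some $\Delta_2(A_j)$ would have to vanish, which is impossible. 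The converse direction is handled the same way in both (direct substitution into Theorem~\ref{Montesinos_formula}).

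The genuine gap is Class M1, and you have named it yourself without closing it. Establishing that a Class M1 knot cannot satisfy $\b(\vec K)=s$ is exactly the necessity of condition (i), so as written the ``only if'' direction is incomplete: your per-tangle bound $\Delta_1(A_j)\ge\tfrac12$ only yields $\b(\vec K)\ge 2+s/2$, which does not exceed $s$ once $s\ge 4$, and the proposed repair --- that the orientations of the two long Seifert circles pin $\sign(b^j_{2q_j+1})$ so that every Seifert Parity 1 tangle has $b^j_{2q_j}\ge 2$ and hence $\Delta_1(A_j)\ge\tfrac32$ --- is asserted as a plan rather than proved, and you give no analysis of Figure~\ref{decomp} to support it. This is also not the route the paper takes: the paper argues that $\b(\vec K)=2+\sum_j\Delta_1(A_j)=s$ would force tangles with $\Delta_1(A_j)=0$, and then rules this out using Remark~\ref{bj_sign} ($\sign(b^j_1)=-1$ for Parity 1, so either the correction term $(-1+\sign(b^j_{2q_j+1}))/4$ vanishes while $\Delta(A_j)\ge|b^j_1|/2>0$, or $\sign(b^j_{2q_j+1})=-1$ and the negative odd-indexed entries contribute at least $1$ to $\Delta(A_j)$). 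Note that your worry is not idle --- the paper's own reduction to ``at least two tangles with $\Delta_1(A_j)=0$'' itself presupposes that a nonzero $\Delta_1(A_j)$ is at least $1$, which is exactly the kind of strengthened per-tangle bound you say you cannot yet certify --- but identifying the difficulty is not the same as resolving it. Until you either carry out the sign-pinning analysis or substitute the paper's vanishing argument (with the per-tangle bound it implicitly requires), the theorem is not proved.
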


\begin{proof}
``$\Longrightarrow$": Assume that $\vec{K}=\vec{M}(\beta_1/\alpha_1,\ldots, \beta_s/\alpha_s,\delta)$ is an oriented alternating Montesinos knot with $\br(K)=\b(\vec{K})=s$.

If $\vec{K}$ is of Class M1 then every $A_j$ has Seifert Parity 1 and there are at least two $A_j$'s with $\Delta_1(A_j)=0$ since $\Delta_1(A_j)\ge 0$ for each $j$.
By Remark \ref{bj_sign}, $\sign(b^j_1)=-1$. Thus 
$\Delta_1(A_j)=0$ is possible only if  $\sign(b^j_{2q_j+1})=-1$ in (\ref{Deltaoneorthree}).
However then $\Delta_1(A_j)\ge \sum_{b^j_{2m+1}<0,0\le m\le q_j}|b^j_{2m+1}|/2>0$ and therefore $\Delta_1(A_j)=0$ is not possible.

Similarly if $\vec{K}$ is of Class M2 then there exists at least one tangle $A_j$ with $\Delta_2(A_j)=0$. This is only possible if $\sign(b^j_1)=\sign(b^j_{2q_j+1})=-1$ in (\ref{Deltatwo}).
However in such a case \newline
$\Delta_2(A_j)\ge \sum_{b^j_{2m+1}<0,0\le m\le q_j}|b^j_{2m+1}|/2>0$ therefore $\Delta_2(A_j)=0$ is not possible either. This proves that $\vec{K}$ must of Class M3 and therefore all tangles must have Seifert Parity 2 or 3.

Let $\eta$ be the number of tangles $A_j$ with Seifert Parity 3 and $s-\eta$ be the number of tangles $A_j$ with Seifert Parity 2. We note that the contribution $\Delta_2(A_j)$ of a tangle with Seifert Parity 2 to the braid index is always an integer (it is just one less than the braid index of the corresponding two bridge link, see Theorem \ref{2bridgeformula}) and therefore we know that a tangle of Seifert parity 2 must have $\Delta_2(A_j)\ge1$. We have
$$
s=\b(\vec{K})\ge \Delta_0(\vec{K})+s-\eta +\sum_{A_j \in \Omega_3}\Delta_3(A_j)  \ge s+\delta-\min\{(\eta+\delta)/2-1,\delta\}.
$$
This implies that $0=\delta-\min\{(\eta+\delta)/2-1,\delta\}$, hence $\eta\ge \delta+2$.
Moreover we must have $\Delta_2(A_j)=1$ for each $A_j$ with Seifert Parity 2 and $\Delta_3(A_j)=0$ for each $A_j$ with Seifert Parity 3.

If $A_j$ has Seifert Parity 2 and $\Delta_2(A_j)=1$, then $\sign(b^j_1)=-1$ by Remark \ref{bj_sign} and equation (\ref{Deltatwo}) becomes
$$
1=\Delta_2(A_j)=\frac{|b^j_1|}{2}+\frac{(1+\sign(b^j_{2q_j+1}))}{4}+\sum_{b^j_{2m}>0,1\le m\le q_j}b^j_{2m}/2+\sum_{b^j_{2m+1}<0,0< m\le q_j}|b^j_{2m+1}|/2\ge \frac{|b^j_1|}{2}.
$$
Thus $|b^j_1|\le 2$. If $b^j_1=-2$ then we must have $\sign(b^j_{2q_j+1})=-1$, however $1=\Delta_2(A_j)$ is only possible if $q_j=0$ and $A_j=(-2)$, which can be written as $(-1,0,-1)$ since $(2)$ and $(1,0,1)$ are both continued fraction decompositions of $1/2$. If $b^j_1=-1$ then $2q_j+1\ge 3$ and we must have $\sign(b^j_{2q_j+1})<0$ hence $b^j_{2q_j+1}=-1$ as well. In addition we can see that both $\sign(b^j_2)$ and $\sign(b^j_3)$ are negative. Thus we must have  $2q_j+1= 3$, that is, $A_j=(-1,-s_j,-1)$ for some integer $s_j>0$. 

If $A_j$ has Seifert Parity 3, then $\Delta_3(A_j)=0$. $b^j_1>0$ by Remark \ref{bj_sign}. If $2q_j+1\ge 3$ then one can easily check that $\sign(b^j_2)>0$, which would lead to $\Delta_3(A_j)>0$. Thus $2q_j+1=1$ and $A_j=(s_j)$ for some integer $s_j>0$. 
Thus we have shown that conditions (i), (ii) and (iii) are satisfied.

``$\Longleftarrow$": This is straight forward by Theorem \ref{Montesinos_formula}.
 \end{proof}

\begin{definition}\label{v_h}{\em 
Let us call a tangle $A_j$ corresponding to a rational number of the form $\beta/\alpha=1/s_j$ for some $s_j\ge 2$ a {\em vertical} tangle, and a tangle $A_j$ corresponding to a rational number of the form $\beta/\alpha=(s_j+1)/(s_j+2)$ for some $s_j\ge 1$ a {\em horizontal} tangle. 
}
\end{definition}

Note that the standard continued fraction decompositions of a vertical and a horizontal tangle are of the form $(s_j)$ and $(1,s_j,1)$ respectively. Using Theorem \ref{braidequalsbridgeMknot}, we can then identify all BB knots in the set of all un-oriented alternating Montesinos knots as stated in the following theorem.
 
\begin{theorem}
\label{unorientedM_BBknots}
Let ${K}={M}(\beta_1/\alpha_1,\ldots, \beta_s/\alpha_s,\delta)$ be an un-oriented alternating Montesinos knot. Then $\b({K})=\br( K)=s$ if and only if $K$ or its mirror image satisfies the following conditions:

(i) every $A_j$ is either a vertical tangle or a horizontal tangle;

(ii) $\eta\ge \delta+2$ where $\eta$ is the number of $A_j$'s that are vertical.
\end{theorem}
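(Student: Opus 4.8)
The plan is to reduce Theorem \ref{unorientedM_BBknots} to the oriented result Theorem \ref{braidequalsbridgeMknot} by unwinding the relationship between the un-oriented braid index $\b(K)$ and the oriented braid index $\b(\vec{K})$ that was spelled out in the introduction: $\b(K)=\min_{\vec{K}}\b(\vec{K})$, and $K$ is a BB knot precisely when $\b(\vec{K})=\br(K)=s$ for \emph{some} orientation $\vec{K}$. Thus $K$ is a BB knot if and only if some oriented representative satisfies conditions (i)--(iii) of Theorem \ref{braidequalsbridgeMknot}. The whole task is therefore to translate those three orientation-dependent conditions into the orientation-free statement (i)--(ii) of the present theorem, keeping in mind that we are also free to replace $K$ by its mirror image (which, as noted in the text, leaves the braid index unchanged and lets us reduce to the case $\beta_j/\alpha_j>0$).

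First I would dispose of the tangle-shape conditions. Condition (iii) of Theorem \ref{braidequalsbridgeMknot} says that in the BB orientation every Seifert Parity 2 tangle has signed vector $(-1,-s_j,-1)$ and every Seifert Parity 3 tangle has signed vector $(s_j)$. I would use Definition \ref{v_h}: the standard (unsigned) continued fraction vectors $(1,s_j,1)$ and $(s_j)$ are exactly the horizontal and vertical tangles, corresponding to the rationals $(s_j+1)/(s_j+2)$ and $1/s_j$. Since the signed vector only records crossing signs inherited from the orientation while the \emph{unsigned} vector $(|b^j_m|)=(a^j_m)$ is the intrinsic tangle data, condition (iii) is equivalent to saying each $A_j$ is, as an un-oriented tangle, either horizontal or vertical. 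This gives condition (i) of the present theorem. The key point to check here is consistency of the sign pattern: I must verify, via Remark \ref{bj_sign} (which ties $\sign(b_1^j)$ to the Seifert Parity) together with the alternating structure, that a horizontal tangle is forced to carry Seifert Parity 2 and a vertical tangle Seifert Parity 3 in the orientation realizing the BB bound — so that the count $\eta=|\Omega_3|$ of Seifert Parity 3 tangles coincides with the number of vertical tangles. Once that identification is made, condition (ii) of Theorem \ref{braidequalsbridgeMknot} ($\eta\ge\delta+2$) becomes condition (ii) here verbatim.

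The main obstacle, and the step I would treat most carefully, is the ``only if'' direction's quantifier over orientations. Theorem \ref{braidequalsbridgeMknot} characterizes when a \emph{given} orientation achieves $\b(\vec{K})=s$, but BB-ness only requires \emph{some} orientation to do so, and a priori different tangles could be forced into Seifert Parity 2 or 3 under different admissible orientations. I would argue that, for an alternating Montesinos diagram with all $\beta_j/\alpha_j>0$, the requirement that $\vec{K}$ be of Class M3 together with the forced tangle shapes leaves essentially no freedom: a horizontal tangle $(1,s_j,1)$ can only contribute $\Delta_2=1$ when oriented as Seifert Parity 2, and a vertical tangle $(s_j)$ can only contribute $\Delta_3=0$ when oriented as Seifert Parity 3, so the BB orientation is determined tangle-by-tangle. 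Conversely, given conditions (i) and (ii) I would \emph{construct} the orientation (top strand right-to-left, $\delta$-crossings positive, each vertical tangle Seifert Parity 3, each horizontal tangle Seifert Parity 2) and simply verify it lands in Class M3 and satisfies (i)--(iii) of Theorem \ref{braidequalsbridgeMknot}, whence that orientation gives $\b(\vec{K})=s=\br(K)$ and thus $\b(K)=s$ by the $\b(K)=\min\b(\vec{K})\ge\br(K)$ sandwich. The mirror-image clause absorbs the reduction to positive fractions, completing both directions.
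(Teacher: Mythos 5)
Your proposal is correct and follows essentially the same route as the paper: the paper reduces the un-oriented statement to Theorem \ref{braidequalsbridgeMknot} and records only the key observation that under condition (ii) one can orient $K$ so that $\vec{K}$ is of Class M3, leaving the details to the reader. Your plan fills in exactly that reduction --- the translation of the signed-vector shapes into the vertical/horizontal dichotomy and the construction of the Class M3 orientation --- the only small imprecision being that the vertical tangle $1/2$, written as $(-1,0,-1)$, may also occur with Seifert Parity 2, so the number of vertical tangles can strictly exceed $\eta=\vert\Omega_3\vert$; this does not affect either direction of your argument.
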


The key observation one needs to make in the proof of Theorem \ref{unorientedM_BBknots} is that under condition (ii), we can oriented $K$ so that the resulting $\vec{K}$ belongs to Class M3. We leave the details to the reader.
 
The BB alternating Montesinos knots with one component and crossing number up to 12 are listed in Table \ref{table} indicated by a superscript $^\dagger$.

\subsection{Non alternating Montesinos knots}

A classification of Montesinos knots (including both the alternating and non-alternating cases) can be found in \cite{BZ}. A non alternating Montesinos knot will have a minimum diagram when there are no integral twists on the right, that is, $\delta=0$. We have the following:

\begin{theorem}
\label{braidequalsbridgeMknotnonalt}
Let $\vec{K}=\vec{M}(\beta_1/\alpha_1,\ldots, \beta_s/\alpha_s,0)$ be an oriented Montesinos knot. If $\vec{K}$ belongs to Class M3, $A_j=(-1,-s_j,-1)$ for some integer $s_j\ge 0$ if it has Seifert Parity 2 and $A_j=(s_j)$ for some integer $s_j\ge 2$ if it has Seifert Parity 3, then $\br(K)=\b(\vec{K})$.
\end{theorem}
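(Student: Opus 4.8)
The plan is to bound $\b(\vec K)$ on both sides by $s$. Since the general inequality $\b(\vec K)\ge\br(K)$ always holds and Theorem~\ref{bridgeMknot} gives $\br(K)=s$ for \emph{every} Montesinos knot with $|\beta_j/\alpha_j|<1$ (alternation is irrelevant there), the lower bound $\b(\vec K)\ge s$ is immediate. Thus the whole content of the theorem is the matching upper bound $\b(\vec K)\le s$, and the hypotheses serve only to pin the Seifert-circle count down to exactly $s$.

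The key observation is that Seifert's algorithm smooths each crossing according to the orientation alone, so the Seifert-circle decomposition of $\vec K$---and hence the number of Seifert circles of its standard diagram---depends only on the oriented shadow and not on the over/under information at the crossings. The proof of Theorem~\ref{Montesinos_formula} in~\cite{DEHL2018} computes the braid index of an alternating Montesinos knot by analyzing precisely this decomposition: its upper-bound half builds, from the oriented diagram, a presentation realizing the stated Seifert-circle count, whereas its lower-bound half is the part that exploits alternation. Our $\vec K$ carries exactly the Seifert parities and signed vectors that feed the Class~M3 formula, and substituting them gives $\Delta_0(\vec K)=\eta$, $\Delta_2(A_j)=1$ for $A_j\in\Omega_2$, and $\Delta_3(A_j)=0$ for $A_j\in\Omega_3$, which sum to $\eta+(s-\eta)=s$. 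Because the upper-bound construction uses only orientation data, the same construction applied to $\vec K$ produces a diagram with exactly $s$ Seifert circles. By Yamada's theorem the braid index never exceeds the number of Seifert circles of any diagram of the knot, so $\b(\vec K)\le s$; combined with $\b(\vec K)\ge\br(K)=s$ from the first step we obtain $\b(\vec K)=s=\br(K)$, as claimed.

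The main obstacle is to make the phrase ``uses only orientation data'' precise: one must check that every Seifert-circle merging or elimination step in the upper-bound half of~\cite{DEHL2018} is dictated by the orientations and is insensitive to which strand lies on top, for this is exactly where the alternating and non-alternating cases could part ways. Two smaller points round out the argument. First, the hypothesis $s_j\ge2$ on each Parity~3 tangle $(s_j)$ guarantees that it is a genuine rational tangle with $|\beta_j/\alpha_j|=1/s_j<1$, so that Theorem~\ref{bridgeMknot} applies to give $\br(K)=s$ and the standard diagram is reduced (the value $s_j=1$ would give a nugatory crossing that could lower the bridge index). Second, one should verify that Class~M3 together with these tangle shapes and $\delta=0$ forces $\eta\ge2$, which is what makes $\Delta_0(\vec K)=\eta$ and hence the count equal to $s$ rather than a degenerate non-integer value.
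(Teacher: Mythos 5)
Your overall strategy is exactly the paper's: the lower bound $\b(\vec K)\ge\br(K)=s$ comes from Theorem~\ref{bridgeMknot}, and the upper bound is to come from Yamada's inequality $\b(\vec K)\le\gamma(\vec K)$ applied to a diagram with $s$ Seifert circles. The gap is that you never actually establish that such a diagram exists. You reduce everything to the claim that ``the upper-bound half'' of the proof of Theorem~\ref{Montesinos_formula} in~\cite{DEHL2018} uses only orientation data and therefore carries over verbatim to the non-alternating setting, and you then explicitly name the verification of that claim as ``the main obstacle'' without resolving it. As written, the argument therefore rests on an unexamined structural property of a proof in another paper; moreover it conflates the \emph{value} of the Class~M3 formula (which is a braid index, in general strictly smaller than the Seifert-circle count of the standard diagram --- for $\delta>0$ the standard diagram has $s+\delta$ Seifert circles and reduction moves are needed) with the Seifert-circle count of some concrete diagram of $\vec K$. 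The arithmetic you do with $\Delta_0$, $\Delta_2$, $\Delta_3$ is correct but does not by itself produce the diagram Yamada's theorem needs.

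The detour is unnecessary, and the paper avoids it: since Seifert's algorithm depends only on the oriented shadow, one can count the Seifert circles of the standard diagram of $\vec K$ (with $\delta=0$) directly. Each Seifert Parity~2 tangle $(-1,-s_j,-1)$ contributes exactly one Seifert circle lying entirely inside the tangle, and every Seifert circle not contained in a single tangle must use two vertical arcs, one from a tangle of type (v) and one from a tangle of type (vii) in Figure~\ref{decomp}, so the Parity~3 tangles $(s_j)$ contribute exactly $\eta$ further circles; the total is $(s-\eta)+\eta=s$, and Yamada's theorem finishes the proof. If you want to keep your route, you must open up the proof in~\cite{DEHL2018} and check the claimed orientation-only property of each reduction step; the direct count above is shorter and self-contained. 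Your two side remarks (that $s_j\ge2$ keeps the Parity~3 tangles genuine and that one should confirm $\eta\ge2$ so that $\Delta_0(\vec K)=\eta$) are sensible but do not close the main gap.
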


\begin{proof} Let $\vec{K}$ be an oriented Montesinos knot satisfying conditions (i) and (ii). 
It is known that $ \b(\vec{K})\le \gamma(\vec{K})$ \cite{Ya}, where $\gamma(\vec{K})$ denotes the number of Seifert circles in the Seifert circle decomposition of $\vec{K}$. Thus we have $\br(K)\le \b(\vec{K})\le \gamma(\vec{K})$, and the result of the theorem follows if we can show that $\gamma(\vec{K})=s$ since $s=\br(K)$ by Theorem \ref{bridgeMknot}.
Each tangle $A_j$ with Seifert Parity 2 contributes a Seifert circle (which is contained within the tangle $A_j$).
 Since each Seifert circle that is not completely contained within a tangle (this includes the large Seifert circle) must contain two vertical arcs: one each from a tangle of case (v) and from a tangle of case (vii) as defined in Figure \ref{decomp} hence each tangle $A_j$ of Seifert Parity 3 also contributes one Seifert circle. It follows that $\vec{K}$ has exactly $s$ Seifert circles.
\end{proof}

\begin{corollary}\label{non_un_cor}
Let ${K}={M}(\beta_1/\alpha_1,\ldots, \beta_s/\alpha_s,0)$ be a Montesinos knot with $s\ge 3$ (that is not necessarily alternating). If each $A_j$ or its mirror image is either a vertical or a horizontal tangle (as defined in Definition \ref{v_h}) and at least two of them are vertical, then $\br(K)=\b({K})$.
\end{corollary}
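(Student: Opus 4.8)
The plan is to reduce the corollary to Theorem \ref{braidequalsbridgeMknotnonalt} by exhibiting one good orientation of $K$. The guiding principle is the sandwich $\br(K)\le\b(K)\le\b(\vec K)$, valid for every orientation $\vec K$: the first inequality is the general fact $\b(\vec K)\ge\br(K)$ recalled in the introduction, and the second is the definition of $\b(K)$ as the minimum of $\b(\vec K)$ over orientations. Hence it suffices to produce a single $\vec K$ with $\b(\vec K)=\br(K)=s$ (with $\br(K)=s$ coming from Theorem \ref{bridgeMknot}), and equality then propagates to $\b(K)$, so that $K$ is a BB knot.

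First I would fix the tangle structures. By Definition \ref{v_h} a vertical tangle has unsigned continued fraction $(s_j)$ with $s_j\ge2$ and a horizontal tangle has $(1,s_j,1)$ with $s_j\ge1$, and the hypothesis guarantees that every $A_j$ has one of these two underlying shapes, possibly as a mirror image. I record at the outset that the crossing signs, equivalently the chirality of each individual tangle, will play no role. The proof of Theorem \ref{braidequalsbridgeMknotnonalt} bounds $\b(\vec K)$ by the number $\gamma(\vec K)$ of Seifert circles, and this count is determined by the chosen orientation together with the number and position of the crossings; none of these changes when a tangle is replaced by its mirror image, since mirroring only swaps over- and under-strands while the orientation-based Seifert smoothing is unaffected. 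Thus it suffices to realize the tangle shapes and Seifert parities demanded by Theorem \ref{braidequalsbridgeMknotnonalt}; the specific signs $(-1,-s_j,-1)$ and $(s_j)$ in its statement either follow from Remark \ref{bj_sign} in the standard-chirality case or are immaterial to the circle count otherwise.

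The decisive step is the orientation, and it runs in parallel to the observation deferred to the reader after Theorem \ref{unorientedM_BBknots}. I would orient the top long strand from right to left and extend the orientation so that every vertical tangle acquires Seifert Parity 3 and every horizontal tangle acquires Seifert Parity 2. Because $\delta=0$, Class M3 requires only that the top and bottom long strands lie on a common large Seifert circle. As recalled in the proof of Theorem \ref{braidequalsbridgeMknotnonalt}, such a large circle must traverse one tangle of case (v) and one of case (vii) in Figure \ref{decomp}, both of which are Seifert Parity 3 shapes realized by vertical tangles; the hypothesis that at least two tangles are vertical is exactly what lets one vertical tangle be oriented as (v) and another as (vii), closing the long strands into a single large circle and placing $\vec K$ in Class M3. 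With the parities so arranged, each Seifert Parity 2 horizontal tangle and each Seifert Parity 3 vertical tangle contributes exactly one Seifert circle, giving $\gamma(\vec K)=s$ and hence $\b(\vec K)\le s=\br(K)$; together with $\b(\vec K)\ge\br(K)$ this forces $\b(\vec K)=\br(K)$.

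I expect the orientation step to be the only genuine obstacle. One must verify that the local orientation choices forcing the prescribed Seifert parity at every tangle can be made globally coherent around the closed diagram, and that coherence of the two bottom strands at each tangle's lowest crossing (the constraint ruling out cases (vi) and (viii) of Figure \ref{decomp}) cannot be met with fewer than two vertical tangles. This is a finite case analysis on the four boundary orientations of each tangle, identical in spirit to the bookkeeping left to the reader in the alternating case of Theorem \ref{unorientedM_BBknots}. Once it is carried out, the sandwich above yields $\b(K)=\br(K)$, completing the proof.
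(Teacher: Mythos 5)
Your proposal is correct and follows essentially the same route as the paper, which simply states that under the corollary's hypotheses one can orient $K$ so that Theorem \ref{braidequalsbridgeMknotnonalt} applies and leaves the details to the reader; you supply exactly those details (the sandwich $\br(K)\le\b(K)\le\b(\vec K)\le\gamma(\vec K)=s$, the choice of orientation giving Seifert Parity 3 to vertical and Parity 2 to horizontal tangles, and the role of the two vertical tangles in realizing cases (v) and (vii) so the long strands close into one Seifert circle). Your observation that mirroring a tangle leaves the Seifert circle count unchanged is a worthwhile clarification, since the signed vectors in the theorem's statement are not literally satisfied by mirrored tangles even though its proof goes through verbatim.
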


If $K$ satisfies the conditions in the corollary, then we can oriented it in a way that it satisfies the conditions of Theorem \ref{braidequalsbridgeMknotnonalt}. The details are left to the reader.

\begin{remark}{\em
We note that the proof of Theorem \ref{braidequalsbridgeMknotnonalt} does not depend on whether $\vec{K}$ is alternating, thus it also works for an oriented alternating Montesinos knot $\vec{K}=\vec{M}(\beta_1/\alpha_1,\ldots, \beta_s/\alpha_s,\delta)$ where $\delta=0$. However this method is not powerful enough to prove Theorem \ref{braidequalsbridgeMknot} since the Seifert circle decomposition of $\vec{K}$ contains $s+\delta$ Seifert circles when $\vec{K}$ satisfies the conditions of Theorem \ref{braidequalsbridgeMknot}. That is, $\vec{K}$ (in its standard diagram form) does not minimize the number of Seifert circles. In \cite{DEHL2018} we explain an algorithm that converts $\vec{K}$ to a non alternating and non minimal diagram $D$ that minimizes the number of Seifert circles via so called reduction moves.}
\end{remark}

All non alternating Montesinos knots with one component and up to 12 crossings that satisfy the condition of 
Corollary \ref{non_un_cor} are listed in Table \ref{table} and are marked with a superscript $^\ddagger$. It turns out that this is a complete list of one component BB non alternating Montesinos knots with crossing number up to 12. In fact, we conjecture that this is generally true. See Conjecture \ref{conjecture1} at the end of the paper.

\subsection{Knots using Conway basic polyhedra}

In \cite{Con67} the concept of Conway basic polyhedra was introduced. Here we will only concentrate on one class of such polyhedra.
The Conway basic polyhedra $n^*$ for $n=2k$ and $k\ge 3$ can be thought of as taking a regular $k$-gon into which we inscribe a second regular $k$-gon such that its vertices touch the midpoints of the sides of the original $k$-gon. Into the smaller $k$-gon we inscribe a third $k$-gon in the same way. The left of Figure \ref{eightstar} illustrate the case of $k=4$. This will form a knot  diagram $D$ with $n$ crossings if we make the diagram alternating. Moreover the writhe $w(D)=0$ (with proper orientation in case of a link), three Seifert circles and braid index 3. For example $8^*$ is the knot $8_{18}$ (the right of Figure \ref{eightstar}) and $10^*$ is the knot $10_{123}$. 
We note that for $n\not\equiv 0\mod(3)$ we obtain a knot diagram and for $n\equiv 0\mod(3)$ we obtain a 3-component knot diagram, for example  for $k=3$ we obtain the Borromean rings ($6_2^3$ in Rolfsen notation). Note that when he first introduced these concepts, Conway used two symbols for the six crossing diagram ($k=3$) denoted by $6^{*}$ and $6^{**}$.  The two symbols denote isomorphic graphs that were introduced to make the obtained notation for knots to be nicer. However they are essentially the same basic polyhedron with alternative way to insert tangles. Conway developed a shorthand notation where the two symbols $6^{**}$ and $6^{*}$ are omitted. If there is an initial dot in the beginning of the symbol, then it is $6^{**}$. If there are more than one tangle symbols separated by dots without the basic polyhedron symbol, then it is meant to be $6^{**}$. If there is also a dot in the beginning of the symbol, then it is $6^{*}$.
If the tangle substitutions are simple then there is a natural way to view these diagrams as a 3 string braids without increasing the crossing number and we obtain a braid of three strings and bridge number three.

\begin{figure}[htb!]
\includegraphics[scale=.15]{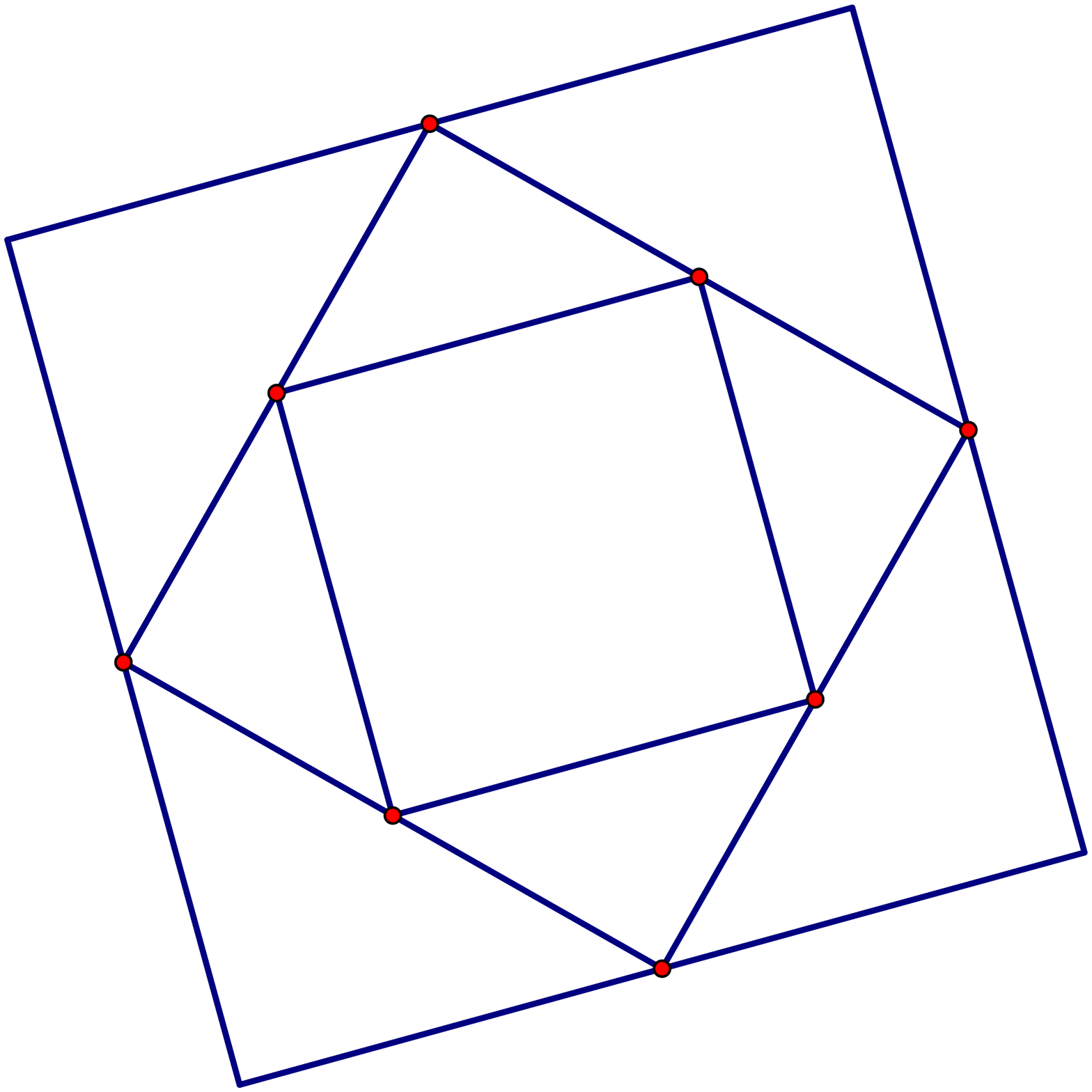}\qquad
\includegraphics[scale=.15]{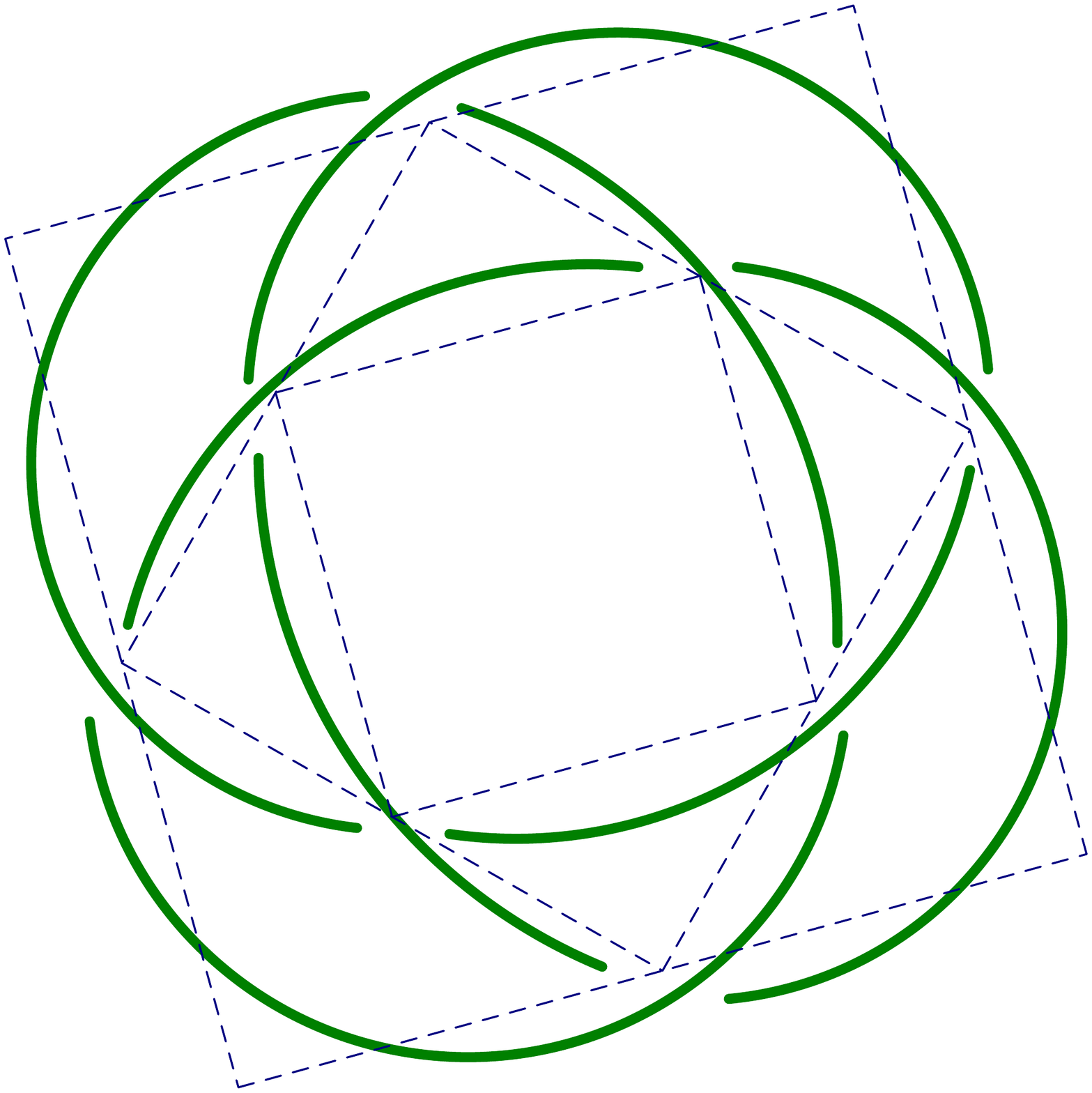}
\caption{Left: The Conway basic polyhedra $8^*$. Right: The knot $8_{18}$. }
\label{eightstar}
\end{figure}

Using the $6^{*}$, $6^{**}$, $8^{*}$ and $10^{*}$ diagrams, we are able to identify all one component BB knots with crossing number up  to 12 that can be constructed using Conway basic polyhedra. They are listed in 
Table \ref{table} and are marked with a superscript $^*$. We would like to point out that 
while the above knot construction using Conway basic polyhedra can be generalized, the generalized construction may not yield a BB knot.
One such example is $n^*$ with $n = 3k$ that can be thought of as inscribing four $k$-gons into each other. The simplest example is $9^*$ which forms the knot $9_{40}$. It has a diagram with 4 Seifert circles and its braid index is four, but its bridge number is only three.

\subsection{ Arborescent knots or Conway algebraic knots.}

Conway algebraic knots are a super family of the Montesinos knots \cite{BoSi, Con67}.
The following describes Conway algebraic knots with bridge and braid index three.
First, they are represented in the Conway notation as $[(a_1;b_1)(a_2;b_2)]$ with $|a_i|\ge 2$ and $|b_i|\ge 2$ \cite{Con67}. These Conway algebraic knots may have one or more components, depending on the parity of $a_i$ and $b_i$. For example, if $a_1$, $a_2$ are both even and $b_1$, $b_2$ are both odd then the knot  has only one component. In the case of multiple components we need to assume that the components are  oriented such that $a_i$ and $b_i$ represent parallel oriented half twists. These knots can be either alternating or non-alternating. In the non alternating case they can also be denoted by $[(a_1;b_1)-(a_2;b_2)]$, $[(a_1;b_1)(a_2-1,1;-b_2)]$, $[(a_1-1,1;-b_1+1,-1)(a_2;b_2)]$ or  $[(-a_1+1,-1;b_1-1,1)(a_2;b_2)]$. The braid index of such a Conway algebraic knot is at most $3$ since the standard diagram given by the Conway notation has 3 Seifert circles in its Seifert circle decomposition. Since these are not 2-bridge knots, their bridge index is at least 3. Thus if $K$ is a Conway algebraic knot described here, then we have $\b(K)=\br(K)=3$.  

\subsection{Summary of BB knots with crossing number up to 12.}

Table \ref{table} lists all one component BB knots with crossing number up to 12. The superscriptions are used in the table as follows. $^t$: torus knots; $^\dagger$: alternating Montesinos knots; $^\ddagger$: non alternating Montesinos knots; $^*$: knots constructed from Conway basic polyhedra; $^\flat$: Conway algebraic knots. The symbols used for the knots follow the historical notation for knots with at most 10 crossings \cite{rolfsen2003knots} and the notation created for the program Knotscape, developed by Morwen and Jim Hoste \cite{hoste1999knotscape} for knots with more than 10 crossings, where the letters a and n denote alternating and non-alternating knots, respectively. We would like to point out that a few BB knots in Table \ref{table} belonging to several families discussed above. For example the knot $8_{19}$ is the torus knot $T(4,3)$, the non alternating Montesinos knot corresponding to $[3;3;2;-]$, as well as the knot constructed using the $8^*$ Conway basic polyhedron. Another example is the knot $10_{124}$: it is the torus knot $T(5,3)$ and also the non alternating Montesinos knot $[5;3;2;-]$.

\begin{center}
\begin{longtable}{|llllllll|}
\hline\hline
 $(3_1)^t$&&&&&&&\\ \hline
 $(5_1)^t$&&&&&&&\\ \hline
 $(7_1)^t$&&&&&&&\\ \hline
 $(8_{5})^\dagger$&$(8_{10})^\dagger$&$(8_{16})^\dagger$&$(8_{17})^*$&$(8_{18})^*$&
 $(8_{19})^{t*\ddagger}$&$(8_{20})^\ddagger$&$(8_{21})^\ddagger$\\ \hline
 $(9_1)^t$&$(9_{16})^\dagger$&&&&&&\\ \hline
 $(10_{46})^\dagger$&$(10_{47})^\dagger$&
 $(10_{48})^\dagger$&$(10_{62})^\dagger$&$(10_{64})^\dagger$&$(10_{79})^\flat$&
$(10_{82})^*$&$(10_{85})^*$\\ \hline
$(10_{91})^*$&$(10_{94})^*$&
$(10_{99})^*$&$(10_{100})^*$&$(10_{104})^*$&$(10_{106})^*$&
$(10_{109})^*$&$(10_{112})^*$\\ \hline
$(10_{116})^*$&$(10_{118})^{t*}$&$(10_{123})^*$&$(10_{124})^{t\ddagger}$&
$(10_{125})^\ddagger$&$(10_{126})^\ddagger$&$(10_{127})^\ddagger$&$(10_{139})^\ddagger$\\ \hline
$(10_{141})^\ddagger$&$(10_{143})^\ddagger$&$(10_{148})^\flat$&$(10_{149})^\flat$&
$(10_{152})^\flat$&$(10_{155})^*$&$(10_{157})^*$&$(10_{159})^*$\\ \hline
$(10_{161})^*$&&&&&&&\\ \hline
$(11a_{44})^\dagger$&$(11a_{47})^\dagger$&$(11a_{57})^\dagger$&$(11a_{231})^\dagger$&
$(11a_{240})^\dagger$&$(11a_{263})^\dagger$&$(11a_{338})^\dagger$&$(11a_{367})^t$\\ \hline
$(11n_{71})^\ddagger$&$(11n_{72})^\ddagger$&$(11n_{73})^\ddagger$&$(11n_{74})^\ddagger$&
$(11n_{75})^\ddagger$&$(11n_{76})^\ddagger$&$(11n_{77})^\ddagger$&$(11n_{78})^\ddagger$\\ \hline
$(11n_{81})^\ddagger$&&&&&&&\\ \hline
$(12a_{146})^\dagger$&$(12a_{167})^\dagger$&$(12a_{369})^\dagger$&$(12a_{576})^\dagger$&
$(12a_{692})^\dagger$&$(12a_{801})^\dagger$&$(12a_{805})^*$&
$(12a_{815})^\flat$\\ \hline
$(12a_{819})^*$&$(12a_{824})^\flat$&$(12a_{835})^\dagger$&
$(12a_{838})^\dagger$&$(12a_{850})^*$&$(12a_{859})^*$&$(12a_{864})^*$&
$(12a_{869})^*$\\ \hline
$(12a_{878})^\dagger$&$(12a_{898})^*$&$(12a_{909})^*$&
$(12a_{916})^*$&$(12a_{920})^*$&$(12a_{981})^*$&$(12a_{984})^*$&
$(12a_{999})^*$\\ \hline
$(12a_{1002})^*$&$(12a_{1011})^*$&$(12a_{1013})^*$&
$(12a_{1027})^\dagger$&$(12a_{1047})^*$&$(12a_{1051})^*$&$(12a_{1114})^*$&
$(12a_{1120})^*$\\ \hline
$(12a_{1168})^*$&$(12a_{1176})^*$&$(12a_{1191})^*$&
$(12a_{1199})^*$&$(12a_{1203})^*$&$(12a_{1209})^*$&$(12a_{1210})^*$&
$(12a_{1211})^*$\\ \hline
$(12a_{1212})^*$&$(12a_{1214})^\dagger$&$(12a_{1215})^*$&
$(12a_{1218})^*$&$(12a_{1219})^*$&$(12a_{1220})^*$&$(12a_{1221})^*$&
$(12a_{1222})^*$\\ \hline
$(12a_{1223})^*$&$(12a_{1225})^*$&$(12a_{1226})^*$&
$(12a_{1227})^*$&$(12a_{1229})^*$&$(12a_{1230})^*$&$(12a_{1231})^*$&
$(12a_{1233})^\dagger$\\ \hline
$(12a_{1235})^*$&$(12a_{1238})^*$&$(12a_{1246})^*$&
$(12a_{1248})^*$&$(12a_{1249})^*$&$(12a_{1250})^*$&$(12a_{1253})^*$&
$(12a_{1254})^*$\\ \hline
$(12a_{1255})^*$&$(12a_{1258})^*$&$(12a_{1260})^*$&
$(12a_{1283})^\dagger$&$(12a_{1288})^\flat$&$(12n_{113})^\flat$&$(12n_{114})^\flat$&
$(12n_{190})^\flat$\\ \hline
$(12n_{191})^\flat$&$(12n_{233})^\ddagger$&$(12n_{234})^\ddagger$&
$(12n_{235})^\ddagger$&$(12n_{242})^\ddagger$&$(12n_{344})^\flat$&$(12n_{345})^\flat$&
$(12n_{417})^*$\\ \hline
$(12n_{466})^\ddagger$&$(12n_{467})^\ddagger$&$(12n_{468})^\ddagger$&
$(12n_{472})^\ddagger$&$(12n_{570})^\ddagger$&$(12n_{571})^\ddagger$&$(12n_{574})^\ddagger$&
$(12n_{604})^\flat$\\ \hline
$(12n_{640})^*$&$(12n_{647})^*$&$(12n_{666})^*$&
$(12n_{674})^\flat$&$(12n_{675})^\flat$&$(12n_{679})^\flat$&$(12n_{683})^*$&
$(12n_{684})^*$\\ \hline
$(12n_{688})^\flat$&$(12n_{707})^*$&$(12n_{708})^*$&
$(12n_{709})^*$&$(12n_{721})^\ddagger$&$(12n_{722})^\ddagger$&$(12n_{725})^\ddagger$&
$(12n_{747})^*$\\ \hline
$(12n_{748})^*$&$(12n_{749})^*$&$(12n_{750})^*$&
$(12n_{751})^*$&$(12n_{767})^*$&$(12n_{820})^*$&$(12n_{821})^*$&
$(12n_{822})^*$\\ \hline
$(12n_{829})^*$&$(12n_{830})^*$&$(12n_{831})^*$&
$(12n_{850})^*$&$(12n_{882})^*$&$(12n_{887})^*$&$(12n_{888})^\flat$&       
 \\\hline\hline
 \caption{List of one component BB knots with crossing number up to 12. \label{table}}
\end{longtable}
\end{center}

\vspace{-0.7in}
\section{The number of BB knots with a given crossing number}
\label{numberofknots}

Table \ref{table2} summarizes the number of one component BB knots with crossing numbers up to 12. By separating the knots with odd crossing numbers and the knots with with even crossing number, we observe that the number of one component BB knots with a given crossing number $n$ increases as the crossing number increases and it seems that there are more BB knots in the case of an even crossing number greater than six; and (ii) the percentage of BB knots among all knots with the same crossing number decreases as the crossing number increases. What are the general behaviors of these numbers and percentages? More specifically, if we let $\mathcal{K}_n$ and $\mathcal{B}_n$ be the numbers of knots and BB knots with crossing number $n$ respectively, then how do $\mathcal{B}_n$ and $\mathcal{B}_n/\mathcal{K}_n$ behave? While it is 
plausible that $\lim_{n\to\infty}\mathcal{B}_n/\mathcal{K}_n=0$, we do not have a proof of it. However, we can prove that $\lim_{n\to\infty}\mathcal{B}_n=\infty$ and does so exponentially. This is stated in Theorem \ref{exp_thm}.

\begin{table}[!hb]
\begin{center}
\begin{tabular}{|c|c|c|c|c|c|c|c|c|c|c|c|c|c|} 
\hline  crossing \# &3&4&5&6&7&8&9&10&11&12\\
\hline \# of knots &1&1&2&3& 7&21&49&165&552&2176\\
\hline \# of BB knots &1&0&1&0&1&8&2&33&17&119\\
\hline $\approx$ percentage &100\%&0\%&50\%&0\%& 14.3\%&38.1\%&4.1\%&20.0\%&3.1\%&5.5\%\\
\hline
\end{tabular}
\end{center}
 \caption{Number of one component BB knots with respect to the crossing number. \label{table2}}
\end{table}

\begin{theorem}\label{exp_thm}
The number $\mathcal{B}_n$ grows exponentially with $n$. Moreover the number of one component BB knots with crossing number $n$ also grows exponentially with $n$.
\end{theorem}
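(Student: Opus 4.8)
The plan is to exhibit an explicit infinite family of BB knots whose count at each crossing number grows at least exponentially. The most natural source, given the machinery already developed, is the family of alternating Montesinos knots characterized in Theorem \ref{unorientedM_BBknots}: a knot $M(\beta_1/\alpha_1,\ldots,\beta_s/\alpha_s,\delta)$ is BB precisely when (after possibly mirroring) every tangle $A_j$ is vertical or horizontal and the number $\eta$ of vertical tangles satisfies $\eta\ge\delta+2$. First I would fix $\delta=0$ and consider Montesinos knots built entirely from vertical tangles $(s_j)$ and horizontal tangles $(1,s_j,1)$; every such choice automatically satisfies $\eta\ge 2$ as soon as at least two tangles are vertical, so the condition is easy to enforce. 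The crossing number of such a diagram is controlled: a vertical tangle $(s_j)$ contributes $s_j$ crossings and a horizontal tangle $(1,s_j,1)$ contributes $s_j+2$ crossings, and since alternating Montesinos diagrams are minimal (by the Kauffman--Murasugi--Thistlethwaite theorem, the reduced alternating diagram realizes the crossing number), the crossing number of the knot is just the sum of these contributions.

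Next I would count. The core estimate is a counting lemma: the number of compositions (ordered tuples $(s_1,\ldots,s_s)$ with each $s_j\ge 2$, say) summing to roughly $n$ grows exponentially in $n$, since the number of compositions of $n$ into parts each at least $2$ is a Fibonacci-type sequence with exponential growth rate. Using only, say, vertical tangles $(s_j)$ with $s_j\in\{2,3\}$ already gives $2^{\Theta(n)}$ ordered tuples of total crossing number $n$. The remaining work is to pass from ordered tuples to distinct knot types, i.e.\ to bound the over-counting. Two effects inflate the raw tuple count relative to the number of knots: the classification of Montesinos knots (from \cite{BZ}) identifies tuples up to cyclic permutation and reversal of the $(\beta_j/\alpha_j)$, together with the mirror operation, and possibly some mod-$1$ normalization. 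Each of these is a symmetry group of size at most linear in $s$ (cyclic rotations and one reflection give $2s\le 2n$ identifications, and mirroring contributes a factor $2$). Therefore the number of distinct knot types is at least the tuple count divided by a polynomial factor in $n$, which still grows exponentially.

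The main obstacle I anticipate is precisely this bookkeeping of when two distinct tuples yield the same knot type, because the Montesinos classification theorem identifies parameters only up to the action of the symmetry group on the unordered collection of fractions (up to cyclic order and orientation-reversal) and modulo the integer $\delta$. To handle it cleanly I would choose the building blocks to break symmetry: for instance use tangles with $s_j$ drawn from a set engineered so that each tuple is rigid under the allowed identifications, or simply accept the polynomial-factor loss, which is harmless for an exponential lower bound. A secondary technical point is confirming that all these knots are genuinely non-trivial, prime, pairwise distinct Montesinos knots of the asserted crossing number --- primality and the crossing-number claim follow from alternation and the standard Montesinos classification, and distinctness follows from the classification once the symmetry quotient is accounted for.

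Finally, for the second assertion --- that the number of \emph{one-component} BB knots also grows exponentially --- I would impose parity constraints on the tangles guaranteeing a single component. Whether a Montesinos knot has one component is determined by the parities of the $\alpha_j$ (equivalently the parities of the tangle entries $s_j$) together with $\delta$; by restricting the $s_j$ to a fixed residue class that forces one component, I retain an exponentially large subfamily while cutting the growth rate only by a constant factor in the base of the exponential. The plan is thus to prove the stronger one-component statement directly, from which the general statement follows a fortiori.
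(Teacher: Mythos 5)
Your proposal is correct in outline and would prove Theorem \ref{exp_thm}, but it takes a genuinely different route from the paper. The paper works entirely inside the family of closed alternating 3-braids: the $2^n$ words of length $n$ in $\sigma_1,\sigma_2^{-1}$ are identified up to cyclic permutation and the $\sigma_1\leftrightarrow\sigma_2^{-1}$ exchange, the flype-admitting words are controlled via the Birman--Menasco classification of closed 3-braids, and the (only linearly many) 2-bridge knots among the closures are discarded using Theorem \ref{2bridge_theorem}; every surviving knot has $\br(K)=\b(K)=3$. You instead take the pretzel/Montesinos knots certified as BB by Theorem \ref{unorientedM_BBknots} with $\delta=0$, count compositions of $n$ into tangle sizes, and quotient by the dihedral symmetry in the Bonahon--Siebenmann classification. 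Your approach buys a cleaner distinctness argument (a symmetry group of order at most $4s$, versus the flype case analysis), makes the ``not 2-bridge'' issue vanish since $\br(K)=s\ge 3$ automatically, and produces BB knots of unbounded bridge index rather than bridge index $3$ only; the crossing-number claim via Kauffman--Murasugi--Thistlethwaite is solid since the standard diagrams are reduced and alternating. The paper's approach buys a larger exponential base ($2^{n}$ raw words versus roughly $1.33^{\,n}$ compositions into parts $2$ and $3$, though enlarging your set of allowed tangles improves this constant). The one place in your sketch that needs to be pinned down is the single-component condition: for $P(s_1,\dots,s_s)$ one needs either all $s_j$ odd with $s$ odd, or exactly one $s_j$ even, so the ``fixed residue class'' should be chosen to land in one of these cases (e.g.\ $s_j\in\{3,5\}$ with $s$ odd), which still leaves an exponentially large subfamily.
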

\begin{proof}
In fact, we will prove that the number of BB knots with braid index 3 grows exponentially with the crossing number $n$. Assume that we only generate alternating 3-braids. Then a word representing such a braid can be generated by two symbols $\sigma_1$ and $\sigma_2^{-1}$. Using these two symbols there are $2^n$ words of length $n$ each of which representing an alternating braid with $n$ crossings.
If we close such a braid we obtain an alternating knot or link diagram with $n$ crossings and at most 3 components.
Now we need to estimate how many of these diagrams represent distinct knots of $n$ crossings. There are several points we need to consider:

(i) If $w$ is a braid word then any cyclic permutation of this word will yield the same link. Thus up to cyclic permutation there are at least $\frac{2^n}{n}$ different words.

(ii) Switching between $\sigma_1$ and $\sigma_2^{-1}$ in a braid word will result in the same knot. Thus up to cyclic permutation and exchange of $\sigma_1$ and $\sigma_2^{-1}$ there are $\frac{2^{n-1}}{n}$ different words (at least).

(iii) The diagram generated by the braid closure is a reduced alternating knot diagram if $\sigma_1$ and $\sigma_2^{-1}$ both occur in the word more than once.

(iv) Using the classification theorem in \cite{Bir93}, we see that a knot diagram obtained by closing a braid satisfying condition (iii) above admits flypes only if the braid is of the form $\sigma_1^u \sigma_2^{-1}\sigma_1^z\sigma_2^{-v}$, $\sigma_2^{-u}\sigma_1\sigma_2^{-z}\sigma_1^{v}$, $\sigma_1^u\sigma_2^{-v}\sigma_1^z\sigma_2^{-1}$ or  $\sigma_2^{-u}\sigma_1^v\sigma_2^{-z}\sigma_1^{1}$  for some positive integers $u$, $z$, and $v$. We note that $\sigma_1^u \sigma_2^{-1}\sigma_1^z\sigma_2^{-v}$ is flype equivalent to $\sigma_1^u\sigma_2^{-v}\sigma_1^z \sigma_2^{-1}$ and $\sigma_2^{-u}\sigma_1\sigma_2^{-z}\sigma_1^{v}$ is flype equivalent to $\sigma_2^{-u}\sigma_1^v\sigma_2^{-z}\sigma_1^{1}$.

It is easy to see that the number of braids satisfying condition (iii) and also admitting flypes is bounded above by $4\cdot {{n-2}\choose{2}}<n^3$. Furthermore, the number of words up to cyclic permutation and exchange of $\sigma_1$ and $\sigma_2^{-1}$ with at least two each of $\sigma_1$ and $\sigma_2^{-1}$ in the word is at least $2^{n-5}/n$. The alternating knots obtained by closing braids satisfying condition (iii) and that do not admit flypes are all distinct, thus there exists a constant $c>0$ such that when $n$ is large enough the number of such knots is bounded below by $2^{n-5}/n-n^3> 2^{cn}$. 
Next, we need to estimate how many of these 3 braids are 2-bridge knots using Theorem \ref{2bridge_theorem}. Using a case by case analysis similar to the method used in the proof of Theorem \ref{torustwobridge} we can list all vectors of 2-bridge knots with braid index three. If we represent the these 2-bridge knots with an odd length vector using positive integers than we can show that these vector are of the following form:
$211$, $a12$, $a2b$ and $3a1$  for length three vectors and $a11b1$ and $1a3b1$ for length five vectors,
where $a,b$ are positive integers. No vectors of a length greater than five can have a braid index of three. Thus for a fixed crossing number $n$ the number of different 2-bridge knots with braid index three is linearly bounded above by $c_1 n$  some fixed constant $c_1>0$. So, the number of  3-braids with $\br(K)=\b(K)=3$ will still growth exponentially. Roughly a third of these will be one component knots hence the number of one component BB knots also grows exponentially with $n$.
\end{proof}

We end our paper with the following two questions and a conjecture.

\begin{question} {\em %
Is $\lim_{n\rightarrow\infty}\mathcal{B}_n/{{\mathcal{K}_n}}=0$ and does this limit approach zero exponentially fast?
}
\end{question}

\begin{question} {\em Is it true that $\mathcal{B}_{2n}>\mathcal{B}_{2n+1}$ for all $n\ge 4$? And why?}
\end{question}

\begin{conjecture}\label{conjecture1}{\em 
The non alternating Montesinos knots described in Theorem \ref{braidequalsbridgeMknotnonalt} (or Corollary \ref{non_un_cor}) are the only non alternating Montesinos knots that are BB knots. 
}
\end{conjecture}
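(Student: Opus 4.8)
The plan is to establish the converse of Corollary~\ref{non_un_cor}: every non-alternating Montesinos BB knot (with $s \ge 3$ tangles and a minimal diagram, so $\delta = 0$) must have each tangle $A_j$ equal, up to mirror image, to a vertical tangle $(s_j)$ or a horizontal tangle $(1, s_j, 1)$, with at least two of them vertical. Since $\br(K) = s$ by Theorem~\ref{bridgeMknot} and $\b(\vec{K}) \ge \br(K)$ always holds, the BB condition $\b(K) = s$ is equivalent to the existence of an orientation $\vec{K}$ with $\b(\vec{K}) = s$. By Yamada's theorem~\cite{Ya} the braid index equals the minimal number of Seifert circles over all diagrams, so the heart of the matter is a sharp \emph{lower} bound on this minimal count in terms of the tangle data; the trivial bound $\b(\vec{K}) \ge s$ does not suffice, and the obstruction is precisely that no analogue of the exact formula in Theorem~\ref{Montesinos_formula} is available in the non-alternating setting.

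First I would extend the reduction-move machinery of~\cite{DEHL2018} from the alternating case to arbitrary (mixed-sign) Montesinos diagrams, aiming for a lower-bound analogue of Theorem~\ref{Montesinos_formula}. Concretely, for each oriented tangle $A_j(b_1^j,\ldots,b_{2q_j+1}^j)$ I would define a contribution $\widetilde{\Delta}(A_j)$ counting the Seifert circles that survive after all admissible reduction moves are applied inside $A_j$, and prove that $\b(\vec{K}) \ge (\text{global baseline}) + \sum_j \widetilde{\Delta}(A_j)$, where the global baseline is governed by the Class M1/M2/M3 dichotomy exactly as before. The essential per-tangle lemma is that $\widetilde{\Delta}(A_j)$ attains its minimal value (matching the baseline used in Theorem~\ref{braidequalsbridgeMknotnonalt}) \emph{precisely} when $A_j$ or its mirror is vertical or horizontal, and is strictly larger otherwise.

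Granting this lower bound, the characterization follows by repeating the case analysis in the proof of Theorem~\ref{braidequalsbridgeMknot} almost verbatim. Classes M1 and M2 are excluded because the Seifert-parity constraints (Remark~\ref{bj_sign}) force a strictly positive contribution from every tangle, so the total exceeds $s$; this forces Class M3. Within Class M3, the equality $\b(\vec{K}) = s$ then forces the minimal contribution from every tangle, which by the per-tangle lemma pins down $A_j = (s_j)$ for each Seifert-Parity-3 tangle and $A_j = (-1,-s_j,-1)$ for each Seifert-Parity-2 tangle, together with $\eta \ge \delta + 2 = 2$. Un-orienting and allowing mirror images then recovers exactly the hypotheses of Corollary~\ref{non_un_cor}.

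The main obstacle is the per-tangle lemma, and more precisely controlling \emph{all} orientations simultaneously: the BB condition requires only that some single orientation achieve $\b(\vec{K}) = s$, so I must rule out that a non-special tangle, while contributing a surplus circle under most orientations, collapses to the minimal count under one cleverly chosen inherited orientation. This calls for a combinatorial induction on the length and interior entries of $(b_1^j,\ldots,b_{2q_j+1}^j)$, tracking how each reduction move deletes a crossing and possibly merges two Seifert circles, and showing that any vector of length $\ge 5$, or of length $3$ with $|b_1^j| \ge 2$ and $|b_3^j| \ge 2$, retains a surplus circle for every admissible sign pattern. Should the direct reduction analysis prove unwieldy, a fallback is to bound $\b(\vec{K})$ from below via the Morton--Franks--Williams inequality applied to the HOMFLY polynomial computed recursively over the tangle sum, trading the casework for a $v$-degree-breadth estimate on each tangle factor.
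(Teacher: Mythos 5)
The statement you are trying to prove is not proved in the paper at all: it is stated as Conjecture~\ref{conjecture1}, and the authors explicitly leave it open, noting only that it has been verified computationally for one-component non-alternating Montesinos knots with crossing number up to 12. So there is no proof in the paper to compare against, and your text should be judged as an attempt at an open problem. Judged that way, it is a research plan rather than a proof: every step that would constitute actual progress is deferred. The ``per-tangle lemma'' and the lower bound $\b(\vec{K}) \ge (\text{baseline}) + \sum_j \widetilde{\Delta}(A_j)$ are exactly the missing technology, and you do not supply them; you only describe what they should say.

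The gap is not merely one of detail. Counting Seifert circles that survive reduction moves can only give \emph{upper} bounds on the braid index (via Yamada's theorem, as in the proof of Theorem~\ref{braidequalsbridgeMknotnonalt}); the matching \emph{lower} bounds in \cite{DEHL2018}, which make Theorem~\ref{Montesinos_formula} an equality, come from the Morton--Franks--Williams inequality applied to the HOMFLY polynomial, and its sharpness is established there only for alternating diagrams. For non-alternating links MFW is known to fail to be sharp in general, so your fallback route inherits the same obstruction rather than circumventing it: an MFW estimate computed over the tangle sum could well sit strictly below the true braid index, in which case it cannot distinguish the conjectured non-BB knots from BB ones. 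A second, independent problem is that the Class M1/M2/M3 trichotomy and the sign constraints of Remark~\ref{bj_sign} were derived under the alternating convention that all fractions $\beta_j/\alpha_j$ (and $\delta$) have the same sign; with mixed signs the possible Seifert-circle patterns at each tangle and the signs of the $b_i^j$ change, so the case analysis of Theorem~\ref{braidequalsbridgeMknot} cannot be repeated ``almost verbatim'' --- it would have to be redone from scratch for a strictly larger set of configurations. Until a diagram-independent lower bound for the braid index of non-alternating Montesinos links is available, the conjecture remains open, and your proposal, while a reasonable outline of how one might attack it, does not close it.
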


\subsection*{Acknowledgement}
Ph.~R.\@ was supported by German Research Foundation
grant RE~3930/1--1. We would like to thank S\"oren Bartels for fruitful
discussions on the numerical simulation of elastic knots.

\newcommand{\href}[2]{#2}


\begin{thebibliography}{10}

\bibitem{sossinsky}
S.~Avvakumov and A.~Sossinsky.
\newblock \href {http://dx.doi.org/10.1134/S1061920814040013} {On the normal
  form of knots}.
\newblock {\em Russ. J. Math. Phys.}, \textbf{21}(4): 421--429, 2014.

\bibitem{bartels13}
S.~Bartels.
\newblock \href {https://doi.org/10.1093/imanum/drs041} {A simple scheme for
  the approximation of the elastic flow of inextensible curves}.
\newblock {\em IMA J. Numer. Anal.}, \textbf{33}(4):1115--1125, 2013.

\bibitem{knotevolve}
S.~Bartels, {\relax Ph}.~Falk, and P.~Weyer.
\newblock \href {https://aam.uni-freiburg.de/agba/forschung/knotevolve/}
  {{KNOT}evolve -- a tool for relaxing knots and inextensible curves}.
\newblock Web application, \url{https://aam.uni-freiburg.de/agba/forschung/knotevolve/}, 2020.

\bibitem{BR}
S.~{Bartels} and {\relax Ph}.~{Reiter}.
\newblock \href {http://dx.doi.org/10.1090/mcom/3633}{Stability of a simple
  scheme for the approximation of elastic knots and self-avoiding inextensible
  curves}.
\newblock {\em Math. Comp.} \textbf{90}:1499-1526, 2021.

\bibitem{BRR}
S.~Bartels, {\relax Ph}.~Reiter, and J.~Riege.
\newblock \href {http://dx.doi.org/10.1093/imanum/drx021} {A simple scheme for
  the approximation of self-avoiding inextensible curves}.
\newblock {\em IMA Journal of Numerical Analysis}, \textbf{38}(2): 543--565, 2018.

\bibitem{Bir93} J.~Birman and W.~Menasco
{\em Studying Links via Closed Braids, III. Classifying Links Which Are Closed 3-braids}, Pacific J. Math., \textbf{161}: 25--113, 1993.


\bibitem{BoSi} F.~Bonahon and L.~Siebermann. New Geometric Splittings of Classical Knots and
the Classification and Symmetries of Arborescent Knots. Preprint, 2010.

\bibitem{BoZi} M.~Boileau and H.~ Zieschang 
{\em Nombre de ponts et générateurs méridiens des entrelacs de Montesinos.}, Comment. Math. Helv., \textbf{60}(2): 270--279, 1985.

\bibitem{Buck} G.~Buck and J.~Simon. \newblock \href {https://doi.org/10.1016/S0166-8641(97)00211-3} {Thickness and crossing number of knots}.
\newblock {\em Topology and its Applications}, \textbf{91}(3): 245--257, 1999.

\bibitem{BZ} 
G.~Burde and H.~Zieschang {\em Knots}, Walter de gruyter, 2003.

\bibitem{Con67} J.~H.~Conway.
{\em An enumeration of knots and links, and some of their algebraic properties}, 
Computational Problems in Abstract Algebra (Proc. Conf., Oxford, 1967), 329--358, 1970.

\bibitem{cromwell2004}  P.~Cromwell {\em Knots and links}, Cambridge university press, 2004.

\bibitem{DEJvR}
Y.~Diao, C.~Ernst, and E.~J.~van Rensburg.
\newblock \href {http://dx.doi.org/10.1017/S0305004198003338} {Thicknesses of
  knots}.
\newblock {\em Math. Proc. Cambridge Philos. Soc.}, \textbf{126}(2): 293--310, 1999.

\bibitem{DEHL2018}
Y.~Diao, C.~Ernst, G.~Hetyei and P.~Liu.  
\newblock \href {https://arxiv.org/abs/1901.09778v1}
{{A Diagrammatic Approach for Determining the Braid Index of Alternating Links}}.
\newblock {\em ArXiv Mathematics e-prints}, 2018.

\bibitem{denne}
Elizabeth {Denne}.
\newblock \href {http://arxiv.org/abs/math/0510561} {{Alternating Quadrisecants
  of Knots}}.
\newblock {\em ArXiv Mathematics e-prints}, 2005.

\bibitem{gallotti}
R.~Gallotti and O.~Pierre-Louis.
\newblock \href {https://doi.org/10.1103/PhysRevE.75.031801} {Stiff knots}.
\newblock {\em Phys. Rev. E (3)}, \textbf{75}(3): 031801, 14, 2007.

\bibitem{GRvdM}
H.~Gerlach, {\relax Ph}.~Reiter, and H.~von~der Mosel.
\newblock \href {https://doi.org/10.1007/s00205-017-1100-9} {The elastic
  trefoil is the doubly covered circle}.
\newblock {\em Arch. Ration. Mech. Anal.}, 225(1): 89--139, 2017.

\bibitem{GiRvdM}
A.~Gilsbach, {\relax Ph}.~Reiter, and H.~von~der Mosel.
Symmetric elastic knots.
\newblock {\em ArXiv Mathematics e-prints}, 2021.

\bibitem{GM99}
O.~Gonzalez and J.~H. Maddocks.
\newblock \href {https://doi.org/10.1073/pnas.96.9.4769} {Global curvature,
  thickness, and the ideal shapes of knots}.
\newblock {\em Proc. Natl. Acad. Sci. USA}, \textbf{96}(9):4769--4773, 1999.

%
%
%
%


%
%
%

\bibitem{hoste1999knotscape} J.~Hoste and M.~Thistlethwaite. Knotscape, \href{https://www.math.utk.edu/~morwen/knotscape.html}{\url{https://www.math.utk.edu/~morwen/knotscape.html}}, 1999.

\bibitem{LS}
J.~Langer and D.~A. Singer.
\newblock \href {https://doi.org/10.1016/0040-9383(85)90046-1} {Curve
  straightening and a minimax argument for closed elastic curves}.
\newblock {\em Topology}, 24(1): 75--88, 1985.


\bibitem{knotinfo}
C. ~Livingston and A. H.~Moore, KnotInfo: Table of Knot Invariants, \href{http://www.indiana.edu/~knotinfo}{\url{http://www.indiana.edu/~knotinfo}}, 2020.

\bibitem{micheletti} M.~Marenda, E.~Orlandini, and C.~Micheletti. Discovering privileged topologies of molecular knots with self-assembling models. \emph{Nature Communications} \textbf9, 3051, 2018.%

\bibitem{milnor}
John~W. Milnor.
\newblock \href{http://dx.doi.org/10.2307/1969467} {On the total curvature of
  knots}.
\newblock {\em Ann. of Math. (2)}, \textbf{52}: 248--257, 1950.

\bibitem{Mu} K.~Murasugi
{\em On The Braid Index of Alternating Links}, Trans. Amer. Math. Soc. \textbf{326}: 237--260, 1991.

\bibitem{Mu2} K.~Murasugi, {\em Knot Theory \& Its Applications}, Birkh\"{a}user, Boston 1996.

\bibitem{rolfsen2003knots} D. ~Rolfsen, {\em Knots and links}, American Mathematical Soc. (346) 2003.

\bibitem{knotserver}
R.~Scharein.
\newblock \href {http://www.colab.sfu.ca/KnotPlot/KnotServer} {The knot
  server}, 2003.
\newblock Webpage, \verb|http://www.colab.sfu.ca/KnotPlot/KnotServer|, accessed 24 November 2017.

\bibitem{S2} H.~Schubert, \"Uber eine numerische Knoteninvariante, Math. Z. \textbf{61}: 245--288, 1954.


\bibitem{Sch2007} J.~Schultens, Bridge numbers of torus knots, Math. Proc. Camb. Phil. Soc. \textbf{143} (2007),
\href {http://dx.doi:10.1017/S0305004107000448}

%
%
%
%
 
 \bibitem{vdM:meek}
Heiko von~der Mosel.
\newblock \href {http://iospress.metapress.com/content/1WRYFLGDG3AL499J}
  {Minimizing the elastic energy of knots}.
\newblock {\em Asymptot. Anal.}, \textbf{18}(1-2): 49--65, 1998.

\bibitem{vdM:eke3}
Heiko von~der Mosel.
\newblock \href {http://dx.doi.org/10.1016/S0294-1449(99)80010-9} {Elastic
  knots in {E}uclidean {$3$}-space}.
\newblock {\em Ann. Inst. H. Poincar\'e Anal. Non Lin\'eaire}, \textbf{16}(2): 137--166,
  1999.
  
 \bibitem{Ya} S.~Yamada. The Minimal Number of Seifert Circles Equals The Braid Index of A Link. {\em Invent. Math.} \textbf{89}: 347--356, 1987.
  \end{thebibliography}
\end{document}